\newcommand{\R}{\mathbb{R}}
\newcommand{\N}{\mathbb{N}}
\newcommand{\C}{\mathbb{C}}
\newcommand{\T}{{\mathbb T}}
\newcommand{\amb}{{\R^N}} 
\newcommand{\capacity}{\textup{cap}} 
\newcommand{\card}{\textup{card}} 
\newcommand{\supp}{\textup{supp}} 
\newcommand{\fek}{\mathcal{F}} 
\newcommand{\domset}{\mathfrak{D}} 
\newcommand{\vepm}{\varepsilon_{\alpha}^{(r)}} 
\newcommand{\keywords}{\textbf{Key words. }\medskip}
\newcommand{\subjclass}{\textbf{MSC 2010. }\medskip}
\newtheorem{thm}{Theorem}[section]
\newtheorem{lem}[thm]{Lemma}
\newtheorem{prop}[thm]{Proposition}
\newtheorem{cor}[thm]{Corollary}
\theoremstyle{definition}
\newtheorem{ex}[thm]{Example}
\begin{document}

\title{\textbf{Reverse Triangle Inequalities for Riesz Potentials and Connections with Polarization}}

\author{I. E. Pritsker, E. B. Saff and W. Wise}

\date{}

\maketitle

\begin{abstract}
We study reverse triangle inequalities for Riesz potentials and their connection with polarization. This work generalizes inequalities for sup norms of products of polynomials, and reverse triangle inequalities for logarithmic potentials. The main tool used in the proofs is the representation for a power of the farthest distance function as a Riesz potential of a unit Borel measure.
\end{abstract}

\subjclass{Primary 31A15, 52A40; Secondary 31A05, 30C15}

\keywords{Riesz potential, farthest distance function, Riesz decomposition, polarization inequality, Chebyshev constant}

\section{\textmd{Introduction: Products of Polynomials and Sums of Logarithmic Potentials}}

Let $E$ be a compact set in $\C$. For any set of real-valued functions $f_j, j=1,\ldots, m,$ we have
\begin{displaymath}
\sum_{j=1}^m \sup_E f_j \geq \sup_E \sum_{j=1}^m f_j
\end{displaymath}
by the triangle inequality. It is not possible to reverse this inequality for arbitrary functions, even by introducing additive constants. However, by restricting the class of functions we can reverse the inequality with sharp additive constants to obtain expressions of the form
\begin{equation}\label{eq:goal}
\sum_{j=1}^m \sup_E f_j \leq C+ \sup_E \sum_{j=1}^m f_j.
\end{equation}

We begin by considering logarithmic potentials $p^{\nu}(z)=\int\log|z-t|\,d\nu(t)$. Let $\nu_j$, $j=1,\ldots,m,$ be positive compactly supported Borel measures, normalized so that $\nu := \sum_{j=1}^m \nu_j$ is a unit measure. We want to find a sharp additive constant $C$ such that
\begin{equation}\label{eq:potgoal}
\sum_{j=1}^m \sup_E p^{\nu_j} \leq C+ \sup_E \sum_{j=1}^m  p^{\nu_j}=C+\sup_Ep^{\nu}.
\end{equation}

The motivation for such inequalities comes from inequalities for the norms of products of polynomials. Let $P(z) = \prod_{j=1}^n(z-a_j)$ be a monic polynomial. Then $\log|P(z)| = n\int\log|z-t|d\tau(t),$ where $\tau = \frac{1}{n}\sum_{j=1}^n\delta_{a_j}$ is the normalized counting measure of the zeros of $P$, with $\delta_{a_j}$ being the unit point mass at $a_j$. Let $||P||_E$ be the uniform (sup) norm on $E$. Then for polynomials $P_j$, $j=1,\ldots, m$, inequality \eqref{eq:potgoal} can be rewritten as
\begin{equation}\label{eq:Pgoal}
\prod_{j=1}^m || P_j ||_E \leq M^n \left|\left| \prod_{j=1}^m P_j \right|\right|_E
\end{equation}
where $M = e^{C}$ and $n$ is the degree of $\prod_{j=1}^m P_j$.

Kneser \cite{kneser} found the first sharp constant $M$ for inequality \eqref{eq:Pgoal}. Let $E=[-1, 1]$ and consider only two factors so that $m=2$. Then \eqref{eq:Pgoal} holds with the multiplicative constant
\begin{equation}\label{eq:kneser}
M = 2^{\frac{n-1}{n}} \prod_{k=1}^{\deg P_1} \left( 1+ \cos\frac{2k-1}{2n}\pi \right)^{\frac{1}{n}} \prod_{k=1}^{\deg P_2} \left( 1+ \cos\frac{2k-1}{2n}\pi \right)^{\frac{1}{n}}.
\end{equation}
The Chebyshev polynomial shows that this constant is sharp. A weaker result was previously given by Aumann \cite{aumann}. Borwein \cite{borwein} provided an alternative proof for this constant \eqref{eq:kneser}. He showed further that on $E=[-1, 1]$, inequality \eqref{eq:Pgoal} holds for any number of factors $m$ with multiplicative constant
\begin{equation}\label{eq:intervalM}
M = 2^{\frac{n-1}{n}}  \prod_{k=1}^{\left[ \frac{n}{2} \right]} \left( 1+ \cos\frac{2k-1}{2n}\pi \right)^{\frac{2}{n}}.
\end{equation}

Another series of such constants were found for $E=D$, the closed unit disk. Mahler \cite{mahler}, building on a weaker result by Gelfond \cite[p.~135]{gelfond}, showed that \eqref{eq:Pgoal} holds for
\begin{equation}\label{eq:diskM}
M = 2.
\end{equation}
While the base $2$ cannot be decreased, Kro{\'o} and Pritsker \cite{kroopritsker} showed that for $m\leq n$, we can use $M = 2^\frac{n-1}{n}$. Furthermore, Boyd \cite{boyd, boydII} expressed the multiplicative constant as a function of the number of factors $m$, and found
\begin{equation*}
M = \exp\left(\frac{m}{\pi} \int_0^{\pi/m} \log\left( 2\cos\frac{t}{2}\right) \right).
\end{equation*}
This constant is asymptotically best possible for each fixed $m$ as $n\to\infty$.

For general sets $E$, the constant $M_E$ depends on the geometry of the set. Let $E$ be a compact set of positive capacity. Pritsker \cite{pritskerspolynomials} showed that a sharp multiplicative constant in \eqref{eq:Pgoal} is given by
\begin{equation*}
M_E = \frac{\exp\left(\int \log d_E(z) d\mu_E(z) \right)}{\capacity(E)},
\end{equation*}
where $\mu_E$ is the equilibrium measure of $E$ and $d_E$ is the farthest distance function defined by $d_E(x) := \sup_{t\in E} |x-t|, \ x\in\C$. Note that this constant generalizes several previous results. We can calculate that $M_{[-1, 1]} \approx 3.20991$, which is the asymptotic version of Borwein's constant from \eqref{eq:intervalM} as $n\to\infty$. For the closed unit disk, we obtain $M_D = 2$, which is the constant given by Mahler \eqref{eq:diskM}. Furthermore, Pritsker and Ruscheweyh \cite{prI, prII} showed that $M_D$ is a lower bound on $M_E$ for any compact $E$ with positive capacity. They also conjectured that $M_{[-1, 1]}$ is an upper bound for all non-degenerate continua.  Shortly afterward, Baernstein, Pritsker, and Laugesen \cite{blp2011} showed $M_{[-1, 1]}$ is an upper bound for centrally symmetric continua. The assumption that $E$ has positive capacity is vital. For example, if $E$ is a finite set, then no inequality of the form \eqref{eq:Pgoal} is possible for any number of factors $m\geq 2$. If $E$ is countable, then the constant $M$ could grow arbitrarily fast as $m$ grows large.

All results for $M$ in \eqref{eq:Pgoal} apply as well to $C$ in \eqref{eq:potgoal} with $C = \log M$, see  \cite{reversetriangle2D}. Specifically, \eqref{eq:potgoal} holds with sharp additive constant
\begin{equation*}
C_E = \int \log d_E(z) d\mu_E(z) - \log \capacity(E).
\end{equation*}
 It follows from \cite{prI, prII} that $C_D = \log 2$ is a lower bound for $C_E$ for any compact set $E$ with positive capacity, while $C_{[-1, 1]} \approx \log 3.20991$ is an upper bound on $C_E$ for certain classes of sets $E$. Allowing the constant to be dependent on the number of terms $m$, Pritsker and Saff \cite{reversetriangle2D} found that \eqref{eq:potgoal} holds for $m$ terms with
\begin{equation*}
C_E(m) = \max_{c_k\in\partial E} \int \log \max_{1\leq k\leq m} |z-c_k| d\mu_E(z) - \log \capacity(E).
\end{equation*}
Note that $\lim_{m\to\infty} C_E(m) = C_E$.

These results were generalized to Green potentials by Pritsker \cite{reversetrianglegreen}. Let $p_j$, $j=1,\ldots,m$, be Green potentials \cite[p.~96]{armitageandgardiner} on a domain $G \subset \overline\C$. Then for any compact set $E\subset G$ we have
\begin{equation}\label{eq:gen}
\sum_{j=1}^m \inf_E p_j \geq C+M\inf_E \sum_{j=1}^m p_j
\end{equation}
where $M$ and $C$ are given in \cite{reversetrianglegreen} as explicit constants depending only on $G$ and $E$, and $C$ is sharp.

The outline of the present paper is as follows. In the next section we prove a reverse triangle inequality analogous to \eqref{eq:gen} for Riesz potentials (see Theorem \ref{thm:2.3}), and give several examples. The main ingredient in the proofs is the
representation of a power of the farthest distance function as the Riesz potential of a positive unit measure (see Theorem \ref{thm:drep}), which may be of independent interest. We consider connections of the reverse triangle inequality with polarization inequalities for Riesz potentials in Section 3. Section 4 contains all proofs.

\section{\textmd{Riesz Potentials and the Distance Function}}

We now consider a compact set $E\subset \amb$, $N\geq 2$, and Riesz potentials of the form  $U^\mu_\alpha (x) = \int |x-t|^{\alpha-N} d\mu(t)$ for $0<\alpha\leq 2$. For $\alpha = 2$, these are Newtonian potentials, and they are superharmonic in $\amb,\ N\geq 3$. If $N=\alpha=2$ then one may study inequalities for logarithmic potentials as done in \cite{reversetriangle2D}. We do not consider the case $N=\alpha=2$ in this paper. For $0<\alpha<2$, the potentials $U_\alpha^\mu$ are not superharmonic, but they are $\alpha$-superharmonic \cite[p.~111]{landkof}. Many of the standard properties of superharmonic functions hold for $\alpha$-superharmonic functions. Our goal is to find a constant $C$ such that
\begin{displaymath}
\sum_{j=1}^m \inf_E U_\alpha^{\nu_j} \geq C + \inf_E \sum_{j=1}^m U_\alpha^{\nu_j}.
\end{displaymath}
 We begin by stating some known facts. For a compact set $E \subset \amb$, let $W_\alpha(E)<\infty$ be the minimum $\alpha$-energy of $E$ and let $\mu_\alpha$ be the $\alpha$-equilibrium measure of $E$ \cite[Chapter 2]{landkof} so that
\begin{displaymath}
W_\alpha(E) = \int U_\alpha^{\mu_\alpha}d\mu_\alpha.
\end{displaymath}

\begin{thm}[Frostman's Theorem]\label{thm:Frostman}
For any compact set $E\subset \amb$ with  $W_\alpha(E)<\infty$, and any $\alpha\in (0,2]$, we have
\begin{displaymath}
U_\alpha^{\mu_\alpha}(x) \leq W_\alpha(E) , \ x\in\amb.
\end{displaymath}
Further,
\begin{displaymath}
U_\alpha^{\mu_\alpha}(x) = W_\alpha(E)  \ \text{for quasi-every } x\in E,
\end{displaymath}
where quasi-everywhere means except for a set of $\alpha$-capacity zero \cite[p.~137]{landkof}.
\end{thm}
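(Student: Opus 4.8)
The plan is to run the classical variational argument for equilibrium measures, adapted to the $\alpha$-Riesz kernel $|x-t|^{\alpha-N}$, and then to invoke the maximum principle for $\alpha$-superharmonic potentials to upgrade an almost-everywhere bound into a global one. First I would record the energy functional $I_\alpha(\nu) = \iint |x-t|^{\alpha-N}\,d\nu(x)\,d\nu(t) = \int U_\alpha^\nu\,d\nu$ and the mutual energy $I_\alpha(\nu,\sigma) = \int U_\alpha^\nu\,d\sigma$, recalling that $\mu_\alpha$ minimizes $I_\alpha$ over unit Borel measures on $E$ and that $W_\alpha(E) = I_\alpha(\mu_\alpha)$. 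For any unit measure $\nu$ on $E$ of finite energy, the convex combination $(1-t)\mu_\alpha + t\nu$ is again an admissible competitor, so minimality gives $I_\alpha\big((1-t)\mu_\alpha + t\nu\big) \geq W_\alpha(E)$ for $t \in [0,1]$. Expanding the quadratic $(1-t)^2 I_\alpha(\mu_\alpha) + 2t(1-t) I_\alpha(\mu_\alpha,\nu) + t^2 I_\alpha(\nu)$ and differentiating at $t = 0^+$ yields $I_\alpha(\mu_\alpha,\nu) \geq W_\alpha(E)$, that is, $\int U_\alpha^{\mu_\alpha}\,d\nu \geq W_\alpha(E)$ for every such $\nu$. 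Here I rely on the finiteness of $W_\alpha(E)$ to justify the expansion and on the positive-definiteness (the energy principle) of the Riesz kernel, valid since $\alpha < N$ throughout once the case $N=\alpha=2$ is excluded.

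Next I would convert this into a pointwise lower bound on $E$. If the set $A_\epsilon = \{x \in E : U_\alpha^{\mu_\alpha}(x) < W_\alpha(E) - \epsilon\}$ had positive $\alpha$-capacity for some $\epsilon > 0$, then it would support a unit measure $\nu$ of finite energy, and integrating the displayed variational inequality against $\nu$ would give $\int U_\alpha^{\mu_\alpha}\,d\nu \leq W_\alpha(E) - \epsilon$, a contradiction. Hence each $A_\epsilon$ has capacity zero, and letting $\epsilon \to 0$ through a sequence shows $U_\alpha^{\mu_\alpha} \geq W_\alpha(E)$ quasi-everywhere on $E$. Since $\mu_\alpha$ has finite energy it charges no set of $\alpha$-capacity zero, so this bound holds $\mu_\alpha$-almost everywhere; combined with the identity $\int U_\alpha^{\mu_\alpha}\,d\mu_\alpha = W_\alpha(E)$, it forces $U_\alpha^{\mu_\alpha} = W_\alpha(E)$ for $\mu_\alpha$-almost every $x$.

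Finally, for the global upper bound I would apply the maximum (domination) principle for Riesz potentials from Landkof's theory: a finite-energy measure whose potential is bounded above by a constant $\mu_\alpha$-almost everywhere has that potential bounded above by the same constant on all of $\amb$. Applied with the constant $W_\alpha(E)$, using the preceding equality $\mu_\alpha$-a.e., this gives $U_\alpha^{\mu_\alpha}(x) \leq W_\alpha(E)$ for every $x \in \amb$. Intersecting this with the quasi-everywhere lower bound on $E$ then yields $U_\alpha^{\mu_\alpha} = W_\alpha(E)$ quasi-everywhere on $E$, completing both assertions. I expect the main obstacle to be precisely this last step: it is where $\alpha$-superharmonicity and the restriction $0 < \alpha \leq 2$ genuinely enter, since the naive passage from a $\mu_\alpha$-almost-everywhere bound to an everywhere bound can fail and must instead be routed through the finite-energy hypothesis and the potential-theoretic maximum principle rather than through any elementary continuity argument.
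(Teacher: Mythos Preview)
The paper does not prove this statement at all: Theorem~\ref{thm:Frostman} is quoted as a known result with a reference to Landkof, and is used only as a tool in the later proofs. So there is no ``paper's own proof'' to compare against.

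That said, your sketch is the standard variational proof and is essentially correct. The convex-perturbation argument to obtain $\int U_\alpha^{\mu_\alpha}\,d\nu \geq W_\alpha(E)$, the capacity argument to get the quasi-everywhere lower bound, and the appeal to the first maximum principle for Riesz potentials with $0<\alpha\le 2$ are exactly the ingredients used in Landkof's treatment. Two small points worth tightening: (i) to pass from ``$U_\alpha^{\mu_\alpha}\le W_\alpha(E)$ holds $\mu_\alpha$-a.e.'' to ``everywhere'' you need the version of the maximum principle that accepts a $\mu$-a.e.\ hypothesis (valid because $\mu_\alpha$ has finite energy; lower semicontinuity alone would go the wrong way), so make sure you cite that form explicitly; (ii) the sets $A_\epsilon$ should be taken relatively closed (or use that a Borel set of positive capacity contains a compact subset of positive capacity) so that they genuinely carry a finite-energy measure. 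With those two details spelled out, the argument is complete.
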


The farthest distance function for a bounded set $E\subset\amb$ is defined by
\begin{displaymath}
d_E(x) := \sup_{t\in E} |x-t|, \ x\in\amb.
\end{displaymath}
The function $d_E$ may be expressed via potentials by using the Riesz Decomposition Theorem. The Riesz Decomposition Theorem is usually stated for Newtonian potentials \cite[p.~108]{landkof}, but a version also exists for Riesz potentials with $0<\alpha<2$ \cite[p.~117]{landkof}.

\begin{thm}\label{thm:drep}
Let $E\subset \amb$ be a compact set consisting of at least two points, with $N\geq 2$ and $0<\alpha\leq 2,\ \alpha\neq N$. Then there exists a unique positive unit Borel measure $\sigma_\alpha$ such that
\begin{equation}\label{eq:baseformula}
d^{\alpha-N}_E(x) = \int |x-t|^{\alpha-N} \ d\sigma_\alpha(t).
\end{equation}
\end{thm}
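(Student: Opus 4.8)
The plan is to show that $d_E^{\alpha-N}$ is itself an $\alpha$-superharmonic function on $\amb$ exhibiting, near infinity, the exact behavior of the Riesz potential of a unit mass, and then to extract $\sigma_\alpha$ directly from the Riesz Decomposition Theorem. Since the hypotheses $N\ge 2$, $0<\alpha\le 2$, $\alpha\neq N$ force $\alpha-N<0$, the map $s\mapsto s^{\alpha-N}$ is decreasing, which lets me rewrite
\begin{equation*}
d_E^{\alpha-N}(x)=\left(\sup_{t\in E}|x-t|\right)^{\alpha-N}=\inf_{t\in E}|x-t|^{\alpha-N}.
\end{equation*}
First I would record the elementary geometric facts that $d_E$ is continuous (being a supremum of $1$-Lipschitz functions) and that $d_E(x)\ge\diam(E)/2>0$ for every $x$, since $E$ has at least two points: if $t_1,t_2$ realize the diameter then $|x-t_1|+|x-t_2|\ge|t_1-t_2|=\diam(E)$. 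Consequently $d_E^{\alpha-N}$ is continuous, strictly positive, bounded above by $(\diam(E)/2)^{\alpha-N}$, and tends to $0$ as $|x|\to\infty$.

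To obtain $\alpha$-superharmonicity I would fix a countable dense subset $\{t_k\}\subset E$ and set $u_k(x)=\min_{1\le i\le k}|x-t_i|^{\alpha-N}$. Each kernel $|x-t_i|^{\alpha-N}$ is $\alpha$-superharmonic (and $\alpha$-harmonic away from $t_i$), so the finite minimum $u_k$ is $\alpha$-superharmonic; moreover $u_k=(\max_{i\le k}|x-t_i|)^{\alpha-N}$ decreases pointwise to $d_E^{\alpha-N}$, because the maxima increase to $d_E$ by density. Since $u_k\ge d_E^{\alpha-N}$ gives $u_k(x)\ge\int u_k\,d\omega_x\ge\int d_E^{\alpha-N}\,d\omega_x$ for the $\alpha$-harmonic measure $\omega_x$ of any ball, the limit satisfies the super-mean-value inequality, and it is lower semicontinuous because it is in fact continuous. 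Hence $d_E^{\alpha-N}$ is $\alpha$-superharmonic on $\amb$.

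Next I would invoke the Riesz Decomposition Theorem for $\alpha$-superharmonic functions (in the $0<\alpha<2$ version as well as the Newtonian case $\alpha=2$) to write $d_E^{\alpha-N}=U_\alpha^{\sigma_\alpha}+h$, where $\sigma_\alpha\ge 0$ is the associated Riesz measure and $h$ is the greatest $\alpha$-harmonic minorant. Because $d_E^{\alpha-N}\ge 0$, the constant $0$ is an $\alpha$-harmonic minorant, so $0\le h\le d_E^{\alpha-N}$; since $d_E^{\alpha-N}\to 0$ at infinity while $h$ is $\alpha$-harmonic on all of $\amb$, a Liouville-type theorem for $\alpha$-harmonic functions forces $h\equiv 0$, whence $d_E^{\alpha-N}=U_\alpha^{\sigma_\alpha}$. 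To see that $\sigma_\alpha$ is a unit measure I would compare the two sides at infinity: with $R_0=\max_{t\in E}|t|$ the triangle inequality gives $|x|-R_0\le d_E(x)\le|x|+R_0$, so the squeeze $(|x|+R_0)^{\alpha-N}\le d_E^{\alpha-N}(x)\le(|x|-R_0)^{\alpha-N}$ yields $|x|^{N-\alpha}d_E^{\alpha-N}(x)\to 1$, while $|x|^{N-\alpha}U_\alpha^{\sigma_\alpha}(x)\to\sigma_\alpha(\amb)$; equating the limits gives $\sigma_\alpha(\amb)=1$. Uniqueness then follows since the Riesz measure of an $\alpha$-superharmonic function is unique, equivalently the potential operator is injective, so $U_\alpha^{\sigma}=U_\alpha^{\sigma'}$ implies $\sigma=\sigma'$.

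I expect the main obstacle to be the mass normalization $\sigma_\alpha(\amb)=1$. The difficulty is that for $0<\alpha<2$ the Riesz kernel is nonlocal, so $\sigma_\alpha$ will in general fail to have compact support, and the limit $|x|^{N-\alpha}U_\alpha^{\sigma_\alpha}(x)\to\sigma_\alpha(\amb)$ cannot be obtained by the naive uniform convergence of the kernel ratio over the support. Justifying this interchange requires controlling the contribution of $\sigma_\alpha$ near the moving point $x$ as $|x|\to\infty$, exploiting the boundedness and the exact decay rate of $d_E^{\alpha-N}$; this is the technical heart of the argument. By contrast, in the Newtonian case $\alpha=2$ the Riesz measure is supported on the set where $d_E$ fails to be smooth and the normalization can be read off cleanly from a Gauss-flux computation using $|\nabla d_E|=1$ almost everywhere. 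The verification that $h\equiv 0$ is a secondary point, needing the correct fractional analogue of the Liouville theorem on $\amb$.
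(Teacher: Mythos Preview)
Your overall strategy matches the paper's: establish that $d_E^{\alpha-N}$ is $\alpha$-superharmonic, invoke the Riesz Decomposition to write $d_E^{\alpha-N}=U_\alpha^{\sigma_\alpha}+h$, eliminate $h$ via decay at infinity, and then check $\sigma_\alpha(\amb)=1$. Your route to $\alpha$-superharmonicity (countable dense subset, finite minima of kernels, pass the super-mean-value inequality to the decreasing limit using continuity) is a mild reorganization of the paper's, which first treats finite $E$, obtains the representation there, and then upgrades to general compact $E$ through a theorem of Landkof on decreasing sequences of potentials; both are sound.

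The gap is precisely where you located it: the mass normalization. One direction is cheap---Fatou applied to $|x|^{N-\alpha}U_\alpha^{\sigma_\alpha}(x)=\int(|x|/|x-t|)^{N-\alpha}\,d\sigma_\alpha(t)$ already yields $\sigma_\alpha(\amb)\le 1$---but the opposite inequality via the claimed pointwise limit $|x|^{N-\alpha}U_\alpha^{\sigma_\alpha}(x)\to\sigma_\alpha(\amb)$ is not justified, and for $0<\alpha<2$ the measure $\sigma_\alpha$ is genuinely non-compactly supported, so no dominated-convergence shortcut is available. The paper does \emph{not} attempt a pointwise asymptotic. Instead it integrates the identity $d_E^{\alpha-N}=U_\alpha^{\sigma_\alpha}$ against the $\alpha$-equilibrium measure $\tau_R$ of the ball $B(R)$, uses Tonelli to convert $\int U_\alpha^{\sigma_\alpha}\,d\tau_R$ into $\int U_\alpha^{\tau_R}\,d\sigma_\alpha$, and exploits the exact identity $U_\alpha^{\tau_R}\equiv c(N,\alpha)R^{\alpha-N}$ on $B(R)$ (with the same constant as a global upper bound). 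This traps $M(R):=\int d_E^{\alpha-N}\,d\tau_R$ between $c(N,\alpha)R^{\alpha-N}\sigma_\alpha(B(R))$ and $c(N,\alpha)R^{\alpha-N}\sigma_\alpha(\amb)$; a separate direct estimate of $M(R)$ from $|x|\le d_E(x)\le |x|+\diam(E)$, divided by $R^{\alpha-N}$ and sent to $R\to\infty$, then forces $\sigma_\alpha(\amb)=1$. This averaging against $\tau_R$ is the missing idea in your outline, and it sidesteps exactly the near-$x$ control problem you flagged.
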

Note that if $N=\alpha=2,$ then a similar representation exists for $\log d_E(x)$, see \cite{pritskerspolynomials}. For the present paper, we are interested in expressing $d_E^{\alpha-N}$ as a potential in the case that $E= \{x_k\}_{k=1}^m \subset \amb$ is a finite set. We can give a short proof that $d_E^{2-N}$ is a potential in this case. Expressing $d_E^{2-N}$ as
\begin{displaymath}
d_E^{2-N}(x) = \left( \max_{k=1, \ldots, m} |x-x_k | \right)^{2-N} = \min_{k=1, \ldots, m} |x-x_k |^{N-2},
\end{displaymath}
we immediately see that $|x-x_k |^{N-2}$ is harmonic as a function of $x$ everywhere except at $x=x_k$ by using the Laplacian. As the minimum of harmonic functions, $d_E^{2-N}$ is superharmonic everywhere, and we may apply the Riesz Decomposition Theorem for Newtonian potentials. It states that there exists a unique Borel measure $\sigma_2$ such that
\begin{displaymath}
d^{2-N}_E(x) = \int |x-t|^{2-N} \ d\sigma_2(t) + h(x),
\end{displaymath}
where $h$ is the greatest harmonic minorant of $d_E^{2-N}$. Since $d_E^{2-N}(x)$ is positive and tends to zero as $x\to \infty$, $h$ must be identically zero \cite[p.~106]{armitageandgardiner} and hence $d^{2-N}_E$ is a Newtonian potential. The complete proof of Theorem \ref{thm:drep} is given in Section \ref{sec:Proofs}.

In the Newtonian case, we can also give explicit examples of $\sigma_2$. If $B$ is the unit ball and $d_B(x) = |x| + 1$, then the measure $\sigma_B$ is found using the Laplacian. Specifically, $d\sigma_B(x) = \Delta d_B^{2-N}(x) = c |x|^{N-2} d_B^{-N} dS$ where $c$ is a constant and $dS$ is the surface area measure on the unit sphere. If $L = [-1, 1]$ then $\Delta d_L^{2-N} = 0$ everywhere except on the hyperplane that is the perpendicular bisector of the segment. It follows that $\sigma_L$ is supported on that hyperplane \cite{me}. Its value can be calculated using the generalized Laplacian, and is given by $d\sigma_L = c d_L^{-N} dS$ where $c$ is a constant and $dS$ is the surface area measure on the hyperplane \cite{me}. For $0<\alpha<2$, the measure $\sigma_E$ should be calculated using fractional Laplacians.

We are now prepared to state a reverse triangle inequality for Riesz potentials.

\begin{thm}\label{thm:2.3}
Let $E \subset \amb$ be a compact set with the minimum $\alpha$-energy $W_\alpha(E)<\infty$, where $0<\alpha \leq 2$. Suppose that $\nu_k$, $k=1, \ldots, m$, are positive compactly supported Borel measures, normalized so that $\nu := \sum_{k=1}^m \nu_k$ is a unit measure, with $m\geq 2$. Then
\begin{equation}\label{eq:infspotentials}
\sum_{k=1}^m \inf_E U_\alpha^{\nu_k} \geq C_E(\alpha, m) + \inf_E \sum_{k=1}^m  U_\alpha^{\nu_k},
\end{equation}
where
\begin{displaymath}
C_E(\alpha, m) := \min_{c_k \in E} \int \min_{1\leq k\leq m} |x-c_k|^{\alpha-N} d\mu_\alpha(x) - W_\alpha(E)
\end{displaymath}
cannot be replaced by a larger constant for each $m\geq 2$. Furthermore, \eqref{eq:infspotentials} holds with $C_E(\alpha, m)$ replaced by
\begin{displaymath}
C_E(\alpha) := \int d_E^{\alpha-N}(x) d\mu_\alpha(x) - W_\alpha(E),
\end{displaymath}
which does not depend on $m$.
\end{thm}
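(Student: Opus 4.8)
The plan is to prove \eqref{eq:infspotentials} by reducing, for each fixed family $\{\nu_k\}$, to a finite set of ``worst'' points and then invoking the domination principle for $\alpha$-Riesz potentials. Since each $U_\alpha^{\nu_k}$ is lower semicontinuous and $E$ is compact, the infimum $\inf_E U_\alpha^{\nu_k}$ is attained at some $c_k\in E$; set $C=\{c_1,\dots,c_m\}$. Then $\sum_k \inf_E U_\alpha^{\nu_k}=\sum_k U_\alpha^{\nu_k}(c_k)=\sum_k\int|c_k-t|^{\alpha-N}\,d\nu_k(t)$ exactly. Because $\alpha-N<0$, for every $t$ we have $|c_k-t|^{\alpha-N}\ge\min_{1\le j\le m}|c_j-t|^{\alpha-N}=d_C^{\alpha-N}(t)$, and Theorem~\ref{thm:drep} applied to the finite set $C$ writes $d_C^{\alpha-N}=U_\alpha^{\sigma_C}$ for a unit measure $\sigma_C$. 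Summing and applying Fubini gives the first key bound $\sum_k\inf_E U_\alpha^{\nu_k}\ge\int U_\alpha^{\sigma_C}\,d\nu=\int U_\alpha^{\nu}\,d\sigma_C$.

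The heart of the argument is to show $\int U_\alpha^\nu\,d\sigma_C\ge \inf_E U_\alpha^\nu + \big(\int d_C^{\alpha-N}\,d\mu_\alpha - W_\alpha(E)\big)$. I would derive this from the pointwise estimate
\[
U_\alpha^{\mu_\alpha}(x)\le U_\alpha^{\nu}(x)+\big(W_\alpha(E)-\inf_E U_\alpha^{\nu}\big),\qquad x\in\amb.
\]
By Frostman's Theorem, $U_\alpha^{\mu_\alpha}=W_\alpha(E)$ holds quasi-everywhere on $E$, hence $\mu_\alpha$-almost everywhere (a finite-energy measure does not charge sets of $\alpha$-capacity zero), and there the right-hand side is $\ge W_\alpha(E)$ because $U_\alpha^\nu\ge\inf_E U_\alpha^\nu$ on $E$; thus the estimate holds $\mu_\alpha$-a.e. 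The domination principle for $\alpha$-Riesz kernels (applicable precisely because $\mu_\alpha$ has finite energy and $0<\alpha\le 2$) then propagates it to all of $\amb$. Integrating the resulting nonnegative function $U_\alpha^\nu-U_\alpha^{\mu_\alpha}+W_\alpha(E)-\inf_E U_\alpha^\nu$ against the unit measure $\sigma_C$, and using $\int U_\alpha^{\mu_\alpha}\,d\sigma_C=\int U_\alpha^{\sigma_C}\,d\mu_\alpha=\int d_C^{\alpha-N}\,d\mu_\alpha$, yields exactly the required bound. Combined with the first step and the trivial observation $\int d_C^{\alpha-N}\,d\mu_\alpha-W_\alpha(E)\ge C_E(\alpha,m)$ (as $C_E(\alpha,m)$ is the minimum over all admissible point configurations, and our $c_k$ form one such), this completes the proof of \eqref{eq:infspotentials}. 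The $m$-independent constant $C_E(\alpha)$ follows by the same scheme with $C$ replaced by $E$: from $c_k\in E$ we get $|c_k-t|^{\alpha-N}\ge d_E(t)^{\alpha-N}=U_\alpha^{\sigma_\alpha}(t)$, and one integrates against $\sigma_\alpha$ in place of $\sigma_C$.

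The main obstacle, and the step that forces the hypothesis $\alpha\le 2$, is the domination principle: it is exactly the tool that converts the Frostman equality ``$\mu_\alpha$-a.e.\ on $E$'' into a global pointwise inequality valid on $\supp\sigma_C$, a set that typically lies off $E$ (for instance on a bisecting hyperplane). Without it the naive inequality $\int U_\alpha^\nu\,d\sigma_C\ge\inf_E U_\alpha^\nu$ is false, since $\sigma_C$ need not be carried by $E$. I regard the careful invocation of the domination principle, and the verification that its finite-energy hypothesis is met, as the one genuinely nontrivial ingredient.

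Finally, for sharpness I would exhibit near-extremal families. Fix points $c_1^*,\dots,c_m^*\in E$ attaining the minimum in $C_E(\alpha,m)$, partition $\amb$ into the farthest-point cells $A_k=\{x:\ |x-c_k^*|\ge|x-c_j^*|\ \text{for all } j\}$, and set $\nu_k=\mu_\alpha|_{A_k}$, so that $\nu=\mu_\alpha$. Then $\inf_E U_\alpha^\nu=\inf_E U_\alpha^{\mu_\alpha}=W_\alpha(E)$, while the first-order optimality of the $c_k^*$ forces each $c_k^*$ to minimize $U_\alpha^{\nu_k}$ over $E$ with value $\int_{A_k}d_C^{\alpha-N}\,d\mu_\alpha$; summing over $k$ gives $\sum_k\inf_E U_\alpha^{\nu_k}=\int d_C^{\alpha-N}\,d\mu_\alpha$, so the difference equals $C_E(\alpha,m)$. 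I expect the only technical care needed here is justifying $\inf_E U_\alpha^{\mu_\alpha}=W_\alpha(E)$ and the stationarity of the $c_k^*$, both routine consequences of Frostman's Theorem and the smoothness of $x\mapsto\int d_{\{c_j\}}^{\alpha-N}\,d\mu_\alpha$ away from coincidences.
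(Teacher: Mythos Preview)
Your proof of the inequality \eqref{eq:infspotentials} is essentially identical to the paper's: choose points $c_k\in E$ realizing the infima, bound below by the farthest-distance potential $d_C^{\alpha-N}=U_\alpha^{\sigma_C}$ via Theorem~\ref{thm:drep}, swap the order of integration, and then invoke the domination principle to extend the Frostman inequality $U_\alpha^{\mu_\alpha}\le U_\alpha^\nu+(W_\alpha(E)-\inf_E U_\alpha^\nu)$ from $\supp\mu_\alpha$ to all of $\amb$. This is exactly the paper's argument.

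The sharpness argument, however, has a genuine gap. Your claim that $\inf_E U_\alpha^{\mu_\alpha}=W_\alpha(E)$ is \emph{not} a routine consequence of Frostman's Theorem: Frostman gives equality only quasi-everywhere on $E$, and at irregular points of $E$ the equilibrium potential can be strictly smaller than $W_\alpha(E)$. Your construction with $\nu=\mu_\alpha$ therefore establishes sharpness only when $E$ is regular. The paper gives precisely this argument under that hypothesis, and then supplies a second proof covering the general case: one takes $\nu$ to be the normalized counting measure $\tau_n$ on the $n$-point minimum-energy (Fekete) configuration $\mathcal{F}_n$, partitions $\mathcal{F}_n$ according to the farthest-point cells of the optimal $c_k^*$, and uses Lemma~\ref{lem:liminfFeketepts} (namely $\inf_E U_\alpha^{\tau_n}\to W_\alpha(E)$) together with the weak$^*$ convergence $\tau_n\to\mu_\alpha$ to obtain asymptotic equality as $n\to\infty$. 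This bypasses regularity entirely.

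A minor remark: in your sharpness sketch the ``first-order optimality'' assertion that each $c_k^*$ actually minimizes $U_\alpha^{\nu_k}$ over $E$ is both unproved and unnecessary. The trivial bound $\inf_E U_\alpha^{\nu_k}\le U_\alpha^{\nu_k}(c_k^*)$ already yields $\sum_k\inf_E U_\alpha^{\nu_k}\le\int d_C^{\alpha-N}\,d\mu_\alpha$, which combined with the forward inequality would force equality---provided $\inf_E U_\alpha^{\mu_\alpha}=W_\alpha(E)$. That is the real obstruction, not stationarity.
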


In the Newtonian case $\alpha = 2$, the minimum principle holds and so the minimum in $C_E(2, m)$ is achieved on the boundary of $E$. Thus
\begin{displaymath}
C_E(2, m) = \min_{c_k \in \partial E} \int \min_{1\leq k\leq m} |x-c_k|^{2-N} d\mu_2(x) - W_2(E).
\end{displaymath}

A closed set $S \subset E$ is called \emph{dominant} if
\begin{displaymath}
d_E(x) = \max_{t\in S} |x-t| \text{ for all $x\in \supp(\mu_\alpha)$}.
\end{displaymath}
When $E$ has at least one finite dominant set, we define a \emph{minimal dominant} set $\domset_E$ as a dominant set with the smallest number of points denoted by $\card(\domset_E)$. Of course, $E$ might not have finite dominant sets at all, in which case we can take any dominant set as the minimal dominant set, e.g., $\domset_E = \partial E$.
For example, let $E$ be a polyhedron. The vertices of $E$ are a dominant set, since $d_E(x) = \max_{\text{vertices $t \in E$}} |x-t|$ everywhere, not just in $\supp(\mu_\alpha)$. However, this need not be the minimal dominant set. For example, let $E$ be a pyramid. If the apex is close to the base, then it will not be in the minimal dominant set. The hemisphere has the equator as the smallest dominant set, however this set is infinite.

\begin{cor}\label{cor:decreasingCEm}
For every $m\geq 2$, we have $C_E(\alpha, m) \geq C_E(\alpha)$. In particular, if $m< \card(\domset_E)$ then $C_E(\alpha, m) > C_E(\alpha)$, while $C_E(\alpha, m) = C_E(\alpha)$ for all $m \geq \card(\domset_E)$. Furthermore, the constants $C_E(\alpha, m)$ are decreasing in $m$ and $\lim_{m\to\infty} C_E(\alpha, m) = C_E(\alpha)$.
\end{cor}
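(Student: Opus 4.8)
The plan is to exploit that $\alpha\le 2\le N$ and $\alpha\ne N$ force $\alpha-N<0$, so $t\mapsto t^{\alpha-N}$ is \emph{strictly decreasing} on $(0,\infty)$. This lets me rewrite the integrands as
\begin{displaymath}
\min_{1\le k\le m}|x-c_k|^{\alpha-N}=\Bigl(\max_{1\le k\le m}|x-c_k|\Bigr)^{\alpha-N},\qquad d_E^{\alpha-N}(x)=\Bigl(\max_{t\in E}|x-t|\Bigr)^{\alpha-N}.
\end{displaymath}
Before anything else I would record that every competing integral is finite: since $\min_k|x-c_k|^{\alpha-N}\le|x-c_1|^{\alpha-N}$, Frostman's Theorem gives $\int\min_k|x-c_k|^{\alpha-N}\,d\mu_\alpha\le U_\alpha^{\mu_\alpha}(c_1)\le W_\alpha(E)<\infty$. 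Together with lower semicontinuity of this integral in $(c_1,\dots,c_m)$ (via Fatou) and compactness of $E^m$, this also shows the minimum defining $C_E(\alpha,m)$ is attained, which I will need below.

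For the bound $C_E(\alpha,m)\ge C_E(\alpha)$ I would argue pointwise: for any $c_1,\dots,c_m\in E$ one has $\max_k|x-c_k|\le d_E(x)$, and decreasing monotonicity flips this to $(\max_k|x-c_k|)^{\alpha-N}\ge d_E^{\alpha-N}(x)$; integrating against $\mu_\alpha$ and then minimizing over the $c_k$ gives the claim after subtracting $W_\alpha(E)$. Monotonicity in $m$ is softer still: given an optimal $m$-configuration, padding it by repeating one point yields an admissible $(m+1)$-configuration with the identical maximum, so $C_E(\alpha,m+1)\le C_E(\alpha,m)$.

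The heart of the argument is the dominant-set dichotomy. If $m\ge d:=\card(\domset_E)$, I would choose the $c_k$ to contain all points of $\domset_E$ (repeating as needed); then for every $x\in\supp(\mu_\alpha)$ the dominance property forces $\max_k|x-c_k|=d_E(x)$, so the two integrals coincide and $C_E(\alpha,m)\le C_E(\alpha)$, giving equality with the previous paragraph. Conversely, suppose $m<d$ yet $C_E(\alpha,m)=C_E(\alpha)$, and let $c_1,\dots,c_m$ be a minimizer. Since the integrand dominates $d_E^{\alpha-N}$ pointwise and the two finite integrals are equal, $\max_k|x-c_k|=d_E(x)$ for $\mu_\alpha$-a.e.\ $x$; as both sides are continuous, the set where they agree is closed and of full measure, hence contains $\supp(\mu_\alpha)$. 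Thus $\{c_1,\dots,c_m\}$ is a dominant set of at most $m<d$ points, contradicting the minimality of $\domset_E$. This forces $C_E(\alpha,m)>C_E(\alpha)$ for $m<d$ and $C_E(\alpha,m)=C_E(\alpha)$ for $m\ge d$.

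Finally, for the limit I would combine monotonicity (the sequence decreases) with the lower bound $C_E(\alpha)$ to get $\lim_m C_E(\alpha,m)\ge C_E(\alpha)$; when a finite dominant set exists the sequence is eventually constant and equals $C_E(\alpha)$ by the dichotomy. For the remaining case I would approximate: given $\varepsilon>0$, a finite $\varepsilon$-net $\{c_k\}\subset E$ satisfies $d_E(x)-\varepsilon\le\max_k|x-c_k|\le d_E(x)$, so $\max_k|x-c_k|\to d_E(x)$ uniformly as the net refines. Since on $\supp(\mu_\alpha)\subset E$ one has $\diam(E)/2\le d_E(x)\le\diam(E)$, these values stay in a compact subinterval of $(0,\infty)$ on which $t^{\alpha-N}$ is continuous, so the integrals converge and $\lim_m C_E(\alpha,m)\le C_E(\alpha)$. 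The main obstacle I anticipate is the strict-inequality half of the dichotomy: the passage from equality of integrals to equality \emph{everywhere on} $\supp(\mu_\alpha)$, which is exactly what ties the analytic extremal problem to the combinatorial notion of a dominant set, and which relies on the strict monotonicity of $t\mapsto t^{\alpha-N}$ to recover $\max_k|x-c_k|=d_E(x)$ from equality of the potentials.
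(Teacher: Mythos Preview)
Your proof is correct and follows essentially the same approach as the paper. The only differences are cosmetic: for the strict inequality when $m<\card(\domset_E)$ you argue by contradiction (equality of integrals $\Rightarrow$ equality $\mu_\alpha$-a.e.\ $\Rightarrow$ equality on $\supp(\mu_\alpha)$ by closedness $\Rightarrow$ the minimizer would be a dominant set of $\le m$ points), whereas the paper argues directly that some $x_0\in\supp(\mu_\alpha)$ has $d_m^*(x_0)<d_E(x_0)$ and spreads this by continuity; and for the limit you use uniform convergence via $\varepsilon$-nets and the bound $\diam(E)/2\le d_E\le\diam(E)$ on $E$, while the paper uses a countable dense subset together with the Dominated Convergence Theorem.
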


\begin{cor}\label{cor:smoothboundary}
If $E\subset\amb$ is a compact set with $C^1$-smooth boundary and with finitely many connected components, then $C_E(\alpha, m) > C_E(\alpha)$ for all $m\in \N$, $m\geq 2$.
\end{cor}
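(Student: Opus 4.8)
The plan is to derive Corollary \ref{cor:smoothboundary} from Corollary \ref{cor:decreasingCEm}. Since that result gives $C_E(\alpha,m)=C_E(\alpha)$ exactly when $m\geq\card(\domset_E)$, it suffices to show that a compact set $E$ with $C^1$ boundary and finitely many components possesses no finite dominant set, so that $\card(\domset_E)=\infty$; then $m<\card(\domset_E)$ for every finite $m\geq 2$, and Corollary \ref{cor:decreasingCEm} delivers the strict inequality $C_E(\alpha,m)>C_E(\alpha)$.

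So I would argue by contradiction, assuming $S=\{s_1,\dots,s_n\}\subseteq E$ is a finite dominant set, and set $A_i:=\{x\in\supp(\mu_\alpha):|x-s_i|=d_E(x)\}$. These sets are closed, and dominance means they cover $\supp(\mu_\alpha)$. After discarding redundant points, each nonempty $A_i$ makes $s_i$ a farthest point of $E$ from some $x$, hence $s_i\in\partial E$, where the boundary is $C^1$. The first key step is the localization $A_i\subseteq R_{s_i}$, the line through $s_i$ in the unit normal direction $\mathbf{n}(s_i)$: if $s_i$ maximizes $t\mapsto|x-t|$ over $E$, it is in particular a local maximum over the smooth hypersurface $\partial E$, so the tangential component of the gradient vanishes and $x-s_i$ is parallel to $\mathbf{n}(s_i)$. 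Therefore $\supp(\mu_\alpha)\subseteq\bigcup_{i=1}^n R_{s_i}$, a finite union of lines.

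At this point I would invoke the potential-theoretic fact that for such $E$ the $\alpha$-equilibrium measure is supported on the whole outer boundary $\Gamma$ (the boundary of the unbounded complementary component), a finite union of compact $C^1$ hypersurfaces; thus every relatively open subset of $\supp(\mu_\alpha)$ is a patch of a $C^1$ $(N-1)$-manifold. Applying the Baire category theorem to the finite closed covering $\Gamma=\bigcup_i A_i$, some $A_j$ has nonempty interior in $\Gamma$ and hence contains a connected relatively open patch $V$, with $V\subseteq A_j\subseteq R_{s_j}$. For each $y\in V$ one has $y-s_j\parallel\mathbf{n}(s_j)$ because $y\in R_{s_j}$, and $y-s_j\parallel\mathbf{n}(y)$ by applying the same first-order condition at the smooth point $y$ (for which $s_j$ is a farthest point). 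Hence the Gauss map satisfies $\mathbf{n}(y)=\pm\mathbf{n}(s_j)$ throughout $V$; by continuity on the connected set $V$ it is constant, so $V$ lies in an affine hyperplane $H$ orthogonal to $\mathbf{n}(s_j)$. But $V\subseteq R_{s_j}$ as well, and $R_{s_j}$ meets $H$ transversally in a single point, contradicting that $V$ is infinite.

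I expect the main obstacle to be the input that $\supp(\mu_\alpha)$ genuinely fills a relatively open portion of $\partial E$ rather than concentrating on a lower-dimensional subset; this is where the $C^1$-smoothness and the finitely-many-components hypotheses are used, and it should be justified from the regularity theory of equilibrium measures on smooth sets. Everything else --- the first-order localization, the Baire step, and the constant-Gauss-map contradiction --- is elementary and uniform in $N\geq 2$ and $0<\alpha\leq 2$. As a partial alternative that sidesteps the support identification, one may observe directly that a finite union of lines has zero $\alpha$-capacity whenever $\alpha\leq N-1$ (a line segment carries no measure of finite $\alpha$-energy in that range, including the critical case $\alpha=N-1$), which already contradicts $\mu_\alpha(\bigcup_i R_{s_i})=1$; this disposes of every case except $N=2$ with $1<\alpha<2$, precisely the regime where the geometric argument above is indispensable.
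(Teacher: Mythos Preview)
Your strategy is the paper's: reduce to Corollary~\ref{cor:decreasingCEm} by showing no finite dominant set exists, and do this via the first-order observation that if $s_i$ is a farthest point of $E$ from $x$ then $x$ lies on the normal line $R_{s_i}$ to $\partial E$ at $s_i$. The paper then simply notes that a connected component $J$ of the $C^1$ boundary cannot lie in a finite union of lines (for $N\ge 3$ this is a dimension count; for $N=2$, a closed $C^1$ Jordan curve covered by finitely many lines would have a corner at some transition point). That is the whole argument; no Baire step and no Gauss-map analysis are used.

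Your additional Gauss-map step is both unnecessary and, as written, unjustified. You claim $y-s_j\parallel\mathbf{n}(y)$ ``by applying the same first-order condition at the smooth point $y$''; but $s_j$ maximizes $t\mapsto|y-t|$, so the first-order condition is at the maximizer $s_j$ and yields only $y-s_j\parallel\mathbf{n}(s_j)$. Nothing in the setup constrains the normal at $y$. Fortunately you do not need this: once a relatively open $(N-1)$-dimensional patch $V\subset\Gamma$ sits inside a single line $R_{s_j}$ you are already done for $N\ge 3$, and for $N=2$ the global ``closed $C^1$ curve in finitely many lines'' argument above gives the contradiction. Your caution about identifying $\supp(\mu_\alpha)$ with a smooth portion of $\partial E$ is reasonable; the paper sidesteps this by covering a full boundary component $J=\bigcup_k J_k$, which tacitly uses dominance on all of $J$ rather than just on $\supp(\mu_\alpha)$, and your capacity remark for $\alpha\le N-1$ is a clean way to handle most of the remaining cases.
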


If $E=L:=[-1,1]\subset\R^2$ and $1<\alpha<2$, then $d_L(x)=\max(|x-1|,|x+1|),\ x\in\R^2,$ so that the endpoints form the minimal dominant set with $\card(\domset_L)=2.$ Thus $C_L(\alpha)=C_L(\alpha,2)=C_L(\alpha,m),\ m\geq 2.$

We finish this section with several explicit examples.

\begin{ex}[Unit circle $\T$ in $\C$] \label{ex:circle}
Let $\T \subset \C$ be the unit circle, and let $1<\alpha<2$. We know that $d\mu_\alpha(e^{i\theta}) = d\theta/2\pi$ and
\begin{displaymath}
W_\alpha(\T) = \frac{2^{\alpha-2}}{\sqrt{\pi}} \frac{\Gamma(\frac{\alpha-1}{2})}{\Gamma(\frac{\alpha}{2})},
\end{displaymath}
see \cite{landkof}. We prove in Section \ref{sec:Proofs} that
\begin{displaymath}
\min_{c_k \in E} \int \min_{1\leq k\leq m} |x-c_k|^{\alpha-2} d\mu_\alpha(x) = 2^{\alpha-2} \frac{2m}{\pi} I\left(\frac{\pi}{2m}\right),
\end{displaymath}
where $I(x) = \int_0^x \cos^{\alpha-2}{\theta} \, d\theta$. It is obvious that $d_\T(x)=2, \ x\in \T,$ and that $S$ has no finite dominant set. Therefore,
\begin{displaymath}
C_\T(\alpha, m) = 2^{\alpha-2} \frac{2m}{\pi} I\left(\frac{\pi}{2m}\right) - \frac{2^{\alpha-2}}{\sqrt{\pi}} \frac{\Gamma(\frac{\alpha-1}{2})}{\Gamma(\frac{\alpha}{2})} > C_\T(\alpha) = 2^{\alpha-2} - \frac{2^{\alpha-2}}{\sqrt{\pi}} \frac{\Gamma(\frac{\alpha-1}{2})}{\Gamma(\frac{\alpha}{2})}.
\end{displaymath}
\end{ex}

\begin{ex}[Unit sphere $S^{N-1}$ in $\amb$] \label{ex:sphere}
Let $S^{N-1}:=\{x\in\amb: |x|=1\},\ N\geq 3,$ and let $1<\alpha\le 2$. It is known that $d\mu_\alpha = d\sigma/\omega_N$ is the normalized surface area on $S^{N-1}$ and
\begin{displaymath}
W_\alpha(S^{N-1}) = \frac{2^{\alpha-2}}{\sqrt{\pi}} \frac{\Gamma(\frac{N}{2}) \Gamma(\frac{\alpha-1}{2})}{\Gamma(\frac{N+\alpha-2}{2})},
\end{displaymath}
see \cite{landkof}. It is also clear that $d_{S^{N-1}}(x)=2, \ x\in S^{N-1},$ and that $S^{N-1}$ has no finite dominant set. Hence
\begin{displaymath}
C_{S^{N-1}}(\alpha, m) > C_{S^{N-1}}(\alpha) = 2^{\alpha-N} - \frac{2^{\alpha-2}}{\sqrt{\pi}} \frac{\Gamma(\frac{N}{2}) \Gamma(\frac{\alpha-1}{2})}{\Gamma(\frac{N+\alpha-2}{2})}.
\end{displaymath}
\end{ex}

\begin{ex}[Unit ball $B^N$ in $\amb$] \label{ex:ball}
Let $B^N:=\{x\in\amb: |x|\le 1\},\ N\geq 2,$ and let $0<\alpha\le 2$. Again, $B^N$ has no finite dominant set. The Wiener constant of the ball is
\begin{displaymath}
W_\alpha(B^N) =\frac{\Gamma(\frac{N-\alpha+2}{2}) \Gamma(\frac{\alpha}{2})}{\Gamma(\frac{N}{2})},
\end{displaymath}
see \cite{landkof}.

If $\alpha=2$ and $N\geq 3$  then the equilibrium measure of the ball $d\mu_2 = d\sigma/\omega_N$ is the normalized surface area on $S^{N-1}=\partial B^N$, so that $d_{B^N}(x)=2, \ x\in S^{N-1}=\supp(\mu_2).$ Hence
\begin{displaymath}
C_{B^N}(2, m) > C_{B^N}(2) = 2^{2-N} - 1.
\end{displaymath}

If $0<\alpha<2$ then the equilibrium measure of the ball is
\begin{displaymath}
d\mu_\alpha(x) = \frac{\Gamma\left(\frac{N-\alpha+2}{2}\right)}{\pi^{N/2}\Gamma\left(1- \frac{\alpha}{2} \right)}  \frac{R^{\alpha-N}\,dx}{(R^2-|x|^2)^{\alpha/2}} \text{ for $|x|<R$},
\end{displaymath}
see \cite[p.~163]{landkof}. Since $\supp(\mu_\alpha)=B^N$ in this case, we note that $d_{B^N}(x)=1+|x|,\ x\in B^N,$ so that
\begin{displaymath}
C_{B^N}(\alpha,m)>C_{B^N}(\alpha) = \int (1+|x|)^{\alpha-N}\, d\mu_\alpha(x) - \frac{\Gamma(\frac{N-\alpha+2}{2}) \Gamma(\frac{\alpha}{2})}{\Gamma(\frac{N}{2})},
\end{displaymath}
where $\mu_\alpha$ is given above.
\end{ex}


\section{\textmd{Connections to Polarization Inequalities}}

Let $E$ be a compact set in $\amb$ and let $A_m = \{ x_j \}_{j=1}^m$, denote an $m$-point subset of $E$. The \emph{Riesz polarization quantities}, introduced by Ohtsuka \cite{ohtsuka} and recently studied by Erd\'elyi and Saff \cite{erdsaff}, are given by
\[
M^s (A_m, E):= \inf_{x\in E} \sum_{j=1}^m |x-x_j|^{-s} \text{ and } M_m^s (E):= \sup_{A_m \subset E} M^s(A_m, E), \quad s>0.
\]
Let $\nu_j$ denote the normalized point mass $\delta_{x_j}/m$, so that $\sum_{j=1}^m \nu_j$ is a unit measure.  The Riesz polarization quantity for $s=N-\alpha$ may be rewritten in terms of potentials as
$$M^{N-\alpha}(A_m, E) = m\inf_E \sum_{j=1}^m U_\alpha^{\nu_j}.$$

 As proved by Ohtsuka \cite{ohtsuka}, the normalized limit
 \[
 \mathcal{M}^s(E) := \lim_{m\to\infty} M^s_m(E)/m
 \]
  exists as an extended real number and is called the \emph{Chebyshev constant} of $E$ for the Riesz $s$-potential.
 Moreover, he showed that this Chebyshev constant is always greater than or equal to the associated Wiener constant.
 Combining this fact with Frostman's theorem we deduce the following:
 \begin{prop} For $0<\alpha\le 2$ and any compact set $E \subset \amb$ there holds
 \begin{equation}\label{wienercheb}\mathcal{M}^{N-\alpha}(E) = W_{\alpha}(E).
 \end{equation}
 \end{prop}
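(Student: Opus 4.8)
The plan is to establish the reverse of Ohtsuka's inequality. Since \cite{ohtsuka} already supplies $\mathcal{M}^{N-\alpha}(E) \geq W_\alpha(E)$ as extended real numbers, it suffices to prove the matching upper bound $\mathcal{M}^{N-\alpha}(E) \leq W_\alpha(E)$. If $W_\alpha(E) = +\infty$ (that is, $E$ has $\alpha$-capacity zero), then Ohtsuka's inequality already forces $\mathcal{M}^{N-\alpha}(E) = +\infty$ and the identity holds trivially; so I would assume throughout that $W_\alpha(E) < \infty$, which is exactly the hypothesis under which the $\alpha$-equilibrium measure $\mu_\alpha$ and Frostman's Theorem (Theorem \ref{thm:Frostman}) are available.

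First I would fix an arbitrary $m$-point configuration $A_m = \{x_1, \ldots, x_m\} \subset E$ and estimate $M^{N-\alpha}(A_m, E) = \inf_{x \in E} \sum_{j=1}^m |x - x_j|^{\alpha - N}$ from above. The key observation is that an infimum over $E$ is no larger than the average against any probability measure supported on $E$; taking that measure to be $\mu_\alpha$ (whose support lies in $E$) gives
\begin{displaymath}
\inf_{x \in E} \sum_{j=1}^m |x - x_j|^{\alpha - N} \leq \int \sum_{j=1}^m |x - x_j|^{\alpha - N} \, d\mu_\alpha(x) = \sum_{j=1}^m U_\alpha^{\mu_\alpha}(x_j),
\end{displaymath}
where the last equality is Tonelli together with the symmetry $|x - x_j| = |x_j - x|$, recognizing each term as the equilibrium potential evaluated at $x_j$.

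Next I would invoke the \emph{global} upper bound of Frostman's Theorem, namely $U_\alpha^{\mu_\alpha}(x) \leq W_\alpha(E)$ for every $x \in \amb$; note that only this everywhere-bound is needed, not the quasi-everywhere equality. Applying it at each of the points $x_j \in E$ yields $\sum_{j=1}^m U_\alpha^{\mu_\alpha}(x_j) \leq m\, W_\alpha(E)$, and hence $M^{N-\alpha}(A_m, E) \leq m\, W_\alpha(E)$. Since $A_m \subset E$ was arbitrary, taking the supremum gives $M_m^{N-\alpha}(E) \leq m\, W_\alpha(E)$, so that $M_m^{N-\alpha}(E)/m \leq W_\alpha(E)$ for every $m$. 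Passing to the limit $m \to \infty$ produces $\mathcal{M}^{N-\alpha}(E) \leq W_\alpha(E)$, which combined with Ohtsuka's reverse inequality closes the proof.

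I do not anticipate a genuine obstacle here: the entire content is the single inequality ``infimum $\leq$ equilibrium average,'' and since the bound is uniform in $m$, the limit defining the Chebyshev constant comes essentially for free. The only points requiring a moment of care are ensuring $\supp(\mu_\alpha) \subseteq E$ so that the averaging step is legitimate, and using the everywhere-version of Frostman's upper bound in place of its quasi-everywhere counterpart, which is precisely what guarantees the finiteness of the potentials at the arbitrary points $x_j$.
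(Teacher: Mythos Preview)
Your argument is correct and is essentially the same as the paper's: both use $\inf_E U_\alpha^\mu \le \int U_\alpha^\mu\,d\mu_\alpha = \int U_\alpha^{\mu_\alpha}\,d\mu \le W_\alpha(E)$ via Tonelli and the global Frostman bound, then combine with Ohtsuka's inequality. The only cosmetic difference is that the paper writes the chain for a general unit measure $\mu$, while you specialize immediately to the normalized counting measure of $A_m$.
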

Indeed, given a unit Borel measure $\mu$, Frostman's theorem for such $\alpha$ and $E$ gives
\[
\inf_E U_\alpha^{\mu} \le \int U_\alpha^{\mu}\,d\mu_{\alpha} = \int U_\alpha^{\mu_{\alpha}}\,d\mu \le W_{\alpha}(E),
\]
so that
$\mathcal{M}^{N-\alpha}(E) \leq W_\alpha(E)$, which together with Ohtsuka's inequality yields (\ref{wienercheb}). Alternatively, one can deduce (\ref{wienercheb}) by observing that for the given range of $\alpha$, a maximum
principle holds for the equilibrium potential and appealing to Theorem 11 of Farkas and Nagy \cite{farkas}.

Bounds on the quantity $M_m^{N-\alpha} (E)/m$ and the sets $A_m$ which achieve the maximum in $M_m^{N-\alpha}(E)$ have been the subject of several recent papers \cite{erdsaff, hks}. The reverse triangle inequality in Theorem \ref{thm:2.3} is directly connected with $M_m^{N-\alpha}(E)/m$ in the case of atomic measures. Recall that the inequality \eqref{eq:infspotentials} holds for arbitrary positive Borel measures $\nu_j$ such that $\sum_{j=1}^m \nu_j$ is a unit measure. We  now introduce a similar inequality where each $\nu_j=\delta_{x_j}/m$ is a point mass $1/m$ supported at $x_j\in E$:
\[
\frac{1}{m} \sum_{j=1}^m \inf_{x\in E} |x-x_j| ^{\alpha-N} \geq C_E^{\delta}(\alpha,m)
+ \frac{1}{m}  \inf_{x\in E} \sum_{j=1}^m|x-x_j| ^{\alpha-N},
\]
where $ C_E^{\delta}(\alpha,m)$ denotes the largest (best) constant such that the above inequality holds for all
$\{x_j\}_{j=1}^m \subset E$.  Clearly, we have $ C_E^{\delta}(\alpha,m) \geq  C_E(\alpha,m).$ \

From the definitions of  $C_E^{\delta}(\alpha,m)$ and
$M_m^{N-\alpha}(E)$ we immediately deduce that for all $\alpha < N$,
\[
\max_{A_m \subset E} \frac{1}{m} \sum_{j=1}^m d_E^{\alpha-N}(x_j) -\frac{M_m^{N-\alpha}(E)}{m} \geq C_E^{\delta}(\alpha,m).
\]
In particular, if $E$ is the unit sphere $S^{N-1}\subset \mathbb{R}^N$, we have
\begin{equation}\label{sphere}
2^{\alpha-N}-\frac{M_m^{N-\alpha}(S^{N-1})}{m}=C_{S^{N-1}}^{\delta}(\alpha,m).
\end{equation}
In \cite{hks}, it is proved that for the unit circle ${\T}=S^1$ the maximum polarization for any
$m\ge 2$ is attained for $m$ distinct equally spaced points. Moreover, this maximum, which occurs at the midpoints of the $m$ subarcs joining adjacent points is known explicitly (in finite terms) when $N-\alpha$ is a positive even integer, and asymptotically for all $-\infty <\alpha<N$.  Thereby we obtain the following.

\begin{prop}\label{circleprop}For the unit circle ${\T}=S^1$ there holds, for all $-\infty<\alpha <2,$
\begin{equation}\label{main}
C_{\T}^{\delta}(\alpha,m)=2^{\alpha-2}-\frac{M^{2-\alpha} (A^*_m, \T)}{m}=2^{\alpha-2}-\frac{M_m^{2-\alpha} ( \T)}{m},
\end{equation}
where $A^*_m=\{e^{i2\pi k/m} : k=1,\ldots, m\}.$  Moreover the following asymptotic formulas hold
as $m\to \infty:$

\begin{equation}\label{asym}
C_{\T}^{\delta}(\alpha,m) \sim
\begin{cases}
 \displaystyle{-\frac{2 \zeta(2-\alpha)}{(2\pi)^{2-\alpha}} \, (2^{2-\alpha} - 1)m^{1-\alpha}} \,, \quad
\enskip 1 > \alpha >-\infty\,, \\
\displaystyle {-\frac{1}{\pi} \, \log m}\,, \quad   \enskip \alpha = 1\,, \\
\displaystyle{2^{\alpha-2}-\frac{2^{\alpha-2}}
{\sqrt{\pi}} \, \frac{\Gamma \big( \frac{\alpha-1}{2}\big)}{\Gamma \big(\frac{\alpha}{2} \big)}}= C_{\T}(\alpha)\, ,
\quad 1<\alpha<2,
\end{cases}
\end{equation}
where $\zeta(s)$ denotes the classical Riemann zeta function and
 $a_m \sim b_m$ means that $\lim_{m\rightarrow \infty}{a_m/b_m} = 1$.
\end{prop}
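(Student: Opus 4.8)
The plan is to reduce \eqref{main} to an identity that holds on the circle and then read off \eqref{asym} from the asymptotics of a single trigonometric sum. The key geometric input is that $d_\T(x_j)=2$ for every $x_j\in\T$ (the farthest point of $\T$ is the antipode), and since $\alpha-2<0$ makes $t\mapsto t^{\alpha-2}$ decreasing, $\inf_{x\in\T}|x-x_j|^{\alpha-2}=2^{\alpha-2}$. Hence the left-hand side $\frac1m\sum_j\inf_{x\in\T}|x-x_j|^{\alpha-2}$ in the defining inequality for $C_\T^\delta(\alpha,m)$ is identically $2^{\alpha-2}$, independent of the configuration $\{x_j\}$. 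Solving for the best constant and recalling $\inf_{x\in\T}\sum_j|x-x_j|^{\alpha-2}=M^{2-\alpha}(A_m,\T)$ gives $C_\T^\delta(\alpha,m)=2^{\alpha-2}-\frac1m\sup_{A_m\subset\T}M^{2-\alpha}(A_m,\T)=2^{\alpha-2}-M_m^{2-\alpha}(\T)/m$, the second equality in \eqref{main} (this is the $N=2$ case of \eqref{sphere}). The first equality is then exactly the statement from \cite{hks} that the maximal polarization on $\T$ is attained at the equally spaced set $A_m^*$.

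Next I would make $M^{2-\alpha}(A_m^*,\T)$ explicit. By \cite{hks} the infimum over $x\in\T$ defining the polarization of $A_m^*$ is attained at an arc midpoint; taking $x^*=e^{i\pi/m}$ and using $|e^{i\phi}-e^{i\theta}|=2|\sin\frac{\phi-\theta}{2}|$ yields
\begin{equation*}
\frac{M^{2-\alpha}(A_m^*,\T)}{m}=\frac{2^{\alpha-2}}{m}\sum_{k=1}^m\left(\sin\frac{(2k-1)\pi}{2m}\right)^{\alpha-2}.
\end{equation*}
The factor $\frac1m\sum_{k=1}^m(\sin\frac{(2k-1)\pi}{2m})^{\alpha-2}$ is a midpoint Riemann sum for $\frac1\pi\int_0^\pi(\sin\theta)^{\alpha-2}\,d\theta$, and the three regimes of \eqref{asym} correspond exactly to whether this integral converges.

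For $1<\alpha<2$ the integrand is integrable, the Riemann sum converges, and $\int_0^\pi(\sin\theta)^{\alpha-2}\,d\theta=\sqrt\pi\,\Gamma(\frac{\alpha-1}{2})/\Gamma(\frac{\alpha}{2})$ gives $M_m^{2-\alpha}(\T)/m\to W_\alpha(\T)$, so $C_\T^\delta(\alpha,m)\to2^{\alpha-2}-W_\alpha(\T)=C_\T(\alpha)$. For $\alpha\le1$ the integral diverges and the sum is controlled by its endpoint terms, where $\sin\frac{(2k-1)\pi}{2m}\approx\frac{(2k-1)\pi}{2m}$. For $\alpha<1$ (so $\alpha-2<-1$) this replacement together with $\sum_{k\ge1}(2k-1)^{\alpha-2}=(1-2^{\alpha-2})\zeta(2-\alpha)$ produces a leading term of order $m^{2-\alpha}$, giving $M_m^{2-\alpha}(\T)/m\sim\frac{2\zeta(2-\alpha)}{(2\pi)^{2-\alpha}}(2^{2-\alpha}-1)m^{1-\alpha}$; subtracting this from the now-negligible $2^{\alpha-2}$ yields the first line of \eqref{asym}. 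For $\alpha=1$ the relevant sum is $\frac1m\sum_{k=1}^m\csc\frac{(2k-1)\pi}{2m}\sim\frac2\pi\log m$, so $C_\T^\delta(1,m)=\frac12\bigl(1-\frac1m\sum_{k=1}^m\csc\frac{(2k-1)\pi}{2m}\bigr)\sim-\frac1\pi\log m$.

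The main obstacle is the endpoint asymptotics for $\alpha\le1$: I must justify that the terms near $k=1$ and $k=m$ dominate the $O(m)$ bulk, that replacing $\sin\theta$ by $\theta$ there changes only lower-order terms, and that the resulting tail sums reproduce precisely the constants $(1-2^{\alpha-2})\zeta(2-\alpha)$ (for $\alpha<1$) and $\frac2\pi$ (for $\alpha=1$). Equivalently, these are exactly the asymptotics of $M_m^{2-\alpha}(\T)/m$ derived in \cite{hks}, so the cleanest route is to quote those and substitute into $C_\T^\delta(\alpha,m)=2^{\alpha-2}-M_m^{2-\alpha}(\T)/m$; every other step is an exact identity combined with the optimality of equally spaced points from \cite{hks}.
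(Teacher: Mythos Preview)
Your argument is correct and follows the same route as the paper: the paper's proof simply invokes equation \eqref{sphere} (which you rederive from $d_\T\equiv 2$ on $\T$) together with the optimality of equally spaced points and the asymptotics of $M_m^{2-\alpha}(\T)$ from \cite{hks}. You supply more detail than the paper does---an explicit midpoint formula for $M^{2-\alpha}(A_m^*,\T)/m$ and a sketch of the three asymptotic regimes---but, as you yourself note at the end, the argument ultimately reduces to quoting \cite{hks}, which is exactly what the paper does.
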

For $1<\alpha<2$, we have from Example 2.6 and \eqref{asym} that, for each $m \geq 1,$
\[
C_{\T}(\alpha)<C_{\T}(\alpha,m) \leq C_{\T}^{\delta}(\alpha,m),
\]
with equality holding throughout in the limit as $m \to \infty.$ Consequently, from the formulas in Example \ref{ex:circle} we have
$$\frac{M_m^{N-\alpha}(\T)}{m}=2^{\alpha-2}-C_{\T}^{\delta}(\alpha,m) \le2^{\alpha-2}-C_{\T}(\alpha,m)=W_\alpha(\T) + 2^{\alpha-2} \left( 1 - \frac{2m}{\pi} I\left( \frac{\pi}{2m} \right) \right) < W_\alpha(\T).$$
We remark that the inequality $M_m^{N-\alpha}(\T) \le m W_\alpha(\T)$ was found by a different method in (3.7) of \cite{erdsaff}.

Utilizing \eqref{main} and the polarization formulas in \cite{hks}, we list the first few  explicit formulas for $C_{\T}^{\delta}(\alpha,m)$ that hold whenever $\alpha$ is a  nonpositive even integer and $m \geq 1$:

$$C_{\T}^{\delta}(0,m)=\frac{1}{4}-\frac{m}{4},
$$
$$
C_{\T}^{\delta}(-2,m)=\frac{1}{16}-\frac{m}{24}-\frac{m^3}{48},
$$
$$C_{\T}^{\delta}(-4,m)=\frac{1}{64}-\frac{m}{120}-\frac{m^3}{192}-\frac{m^5}{480}.
$$\\
For the unit sphere in higher dimensions, we have the following.

\begin{prop}\label{sphereprop}
For the unit sphere $S^{N-1}, N>2,$ in $\mathbb{R}^N$ equation \eqref{sphere} holds for all $-\infty<\alpha <N.$ Moreover, the following asymptotic formulas hold
as $m\to \infty:$

\begin{equation}\label{asymsphere}
C_{S^{N-1}}^{\delta}(\alpha,m) \sim
\begin{cases}
 \displaystyle{-\sigma(N-\alpha,N-1)\left(\frac {\Gamma(N/2)}{2\pi^{N/2}}\right)^{(N-\alpha)/(N-1)}m^{\frac{1-\alpha}{N-1}}} \,, \quad
\enskip 1 > \alpha >-\infty\,, \\
\displaystyle {-\frac{\log m}{\sqrt{\pi}} \frac{\Gamma(N/2)}{(N-1)\Gamma((N-1)/2)}}\,, \quad   \enskip \alpha = 1\,, \\
\displaystyle{2^{\alpha-N}-W_{\alpha}(S^{N-1})= C_{S^{N-1}}(\alpha)\,} ,
\quad 1<\alpha<N,
\end{cases}
\end{equation}
where $\sigma(N-\alpha,N-1)$  is a positive constant that depends only on $\alpha$ and $N$ (cf. \cite{BHS4}), and where the formulas for  $W_{\alpha}(S^{N-1})$ and $C_{S^{N-1}}(\alpha)$ are given in Example 2.7.
\end{prop}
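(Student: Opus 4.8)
The plan is to reduce the entire statement to the single identity \eqref{sphere} and then import the known asymptotics of the maximal Riesz polarization $M_m^{N-\alpha}(S^{N-1})$. First I would establish \eqref{sphere} for the full range $-\infty<\alpha<N$. The key geometric fact is that the farthest-distance function is constant on the sphere, $d_{S^{N-1}}(x)=2$ for every $x\in S^{N-1}$, since the maximal distance between two points of $S^{N-1}$ is the diameter $2$, attained at antipodes. Because $\alpha-N<0$, the map $t\mapsto t^{\alpha-N}$ is decreasing, so for each fixed $x_j$ the infimum $\inf_{x\in S^{N-1}}|x-x_j|^{\alpha-N}$ is attained at the antipode $-x_j$ and equals $2^{\alpha-N}$. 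Writing the best constant as
\[
C_{S^{N-1}}^{\delta}(\alpha,m)=\inf_{A_m\subset S^{N-1}}\left(\frac{1}{m}\sum_{j=1}^m \inf_{x}|x-x_j|^{\alpha-N}-\frac{1}{m}\inf_x\sum_{j=1}^m|x-x_j|^{\alpha-N}\right),
\]
the first sum is identically $2^{\alpha-N}$ and the second infimum is $\frac{1}{m}M^{N-\alpha}(A_m,S^{N-1})$; taking the supremum over $A_m$ of the subtracted term yields \eqref{sphere} as an exact equality for every $\alpha<N$, which shows that the asymptotics of $C^\delta$ are governed entirely by those of $M_m^{N-\alpha}/m$ relative to the constant $2^{\alpha-N}$.

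I would then treat the three regimes separately. In the integrable regime $1<\alpha<N$ one has $s:=N-\alpha<N-1=\dim S^{N-1}$, so the sphere has finite $\alpha$-energy and a rotationally invariant equilibrium measure whose potential is constant on $S^{N-1}$; the Frostman-type sandwich used to prove \eqref{wienercheb} then gives $\mathcal{M}^{N-\alpha}(S^{N-1})=W_\alpha(S^{N-1})$. For $\alpha\le2$ this is exactly \eqref{wienercheb}, while for $2<\alpha<N$ the same equality follows from the maximum principle for the equilibrium potential together with Theorem 11 of Farkas and Nagy \cite{farkas}. Hence $M_m^{N-\alpha}/m\to W_\alpha(S^{N-1})$, the term $2^{\alpha-N}$ survives in the limit, and $C_{S^{N-1}}^{\delta}(\alpha,m)\to 2^{\alpha-N}-W_\alpha(S^{N-1})=C_{S^{N-1}}(\alpha)$, which is the third line of \eqref{asymsphere}.

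For $\alpha\le1$, equivalently $s\ge N-1$, the energy is infinite and the polarization grows without bound, so $2^{\alpha-N}$ becomes negligible and $C_{S^{N-1}}^{\delta}(\alpha,m)\sim -M_m^{N-\alpha}(S^{N-1})/m$. In the hypersingular range $\alpha<1$ ($s>N-1$), the maximal polarization satisfies $M_m^{s}(S^{N-1})\sim \sigma(s,N-1)\,\mathcal{H}_{N-1}(S^{N-1})^{-s/(N-1)}\,m^{s/(N-1)}$, with $\sigma(s,N-1)>0$ depending only on $s$ and $N-1$ (cf.\ \cite{BHS4}); substituting $s=N-\alpha$, using $\mathcal{H}_{N-1}(S^{N-1})=2\pi^{N/2}/\Gamma(N/2)$ so that $\mathcal{H}_{N-1}(S^{N-1})^{-1}=\Gamma(N/2)/(2\pi^{N/2})$, and dividing by $m$ produces exactly the $m^{(1-\alpha)/(N-1)}$ term of the first line of \eqref{asymsphere}. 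In the critical case $\alpha=1$ ($s=N-1$) the polarization has the logarithmic rate $M_m^{N-1}(S^{N-1})\sim (\beta_{N-1}/\mathcal{H}_{N-1}(S^{N-1}))\,m\log m$, where $\beta_{N-1}=\pi^{(N-1)/2}/\Gamma((N+1)/2)$ is the volume of the unit ball in $\R^{N-1}$; the duplication identity $2\Gamma((N+1)/2)=(N-1)\Gamma((N-1)/2)$ simplifies this coefficient to $\Gamma(N/2)/(\sqrt{\pi}\,(N-1)\Gamma((N-1)/2))$, giving the middle line of \eqref{asymsphere}.

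The routine part is the algebraic identification of the constants, sketched above; the genuine input, and the main obstacle, is the validity of the sphere polarization asymptotics in the critical and hypersingular regimes with the stated leading constants. These are the deep results borrowed from the discrete minimal-energy and polarization literature \cite{hks, BHS4}, and the crux is to ensure that the normalizations used there (the surface measure $\mathcal{H}_{N-1}$, the unit-ball volume $\beta_{N-1}$, and the lattice-type constant $\sigma(s,N-1)$) match the closed forms in \eqref{asymsphere}. Once this matching is confirmed, the exact identity \eqref{sphere} transfers them verbatim to $C_{S^{N-1}}^{\delta}(\alpha,m)$.
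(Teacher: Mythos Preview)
Your proof is correct and matches the paper's approach exactly: the authors simply state that the proposition is a ``straightforward consequence of the main theorems on polarization proved in \cite{erdsaff}, \cite{borod} and \cite{BHS4}'', and you have carried out precisely that reduction by first deriving the identity \eqref{sphere} from $d_{S^{N-1}}\equiv 2$ and then importing the known polarization asymptotics in the integrable, critical, and hypersingular regimes. One small caveat: for $2<\alpha<N$ the global Frostman bound $U_\alpha^{\mu_\alpha}\le W_\alpha$ need not hold (the Riesz maximum principle does not extend past $\alpha=2$), so your later appeal to ``the maximum principle for the equilibrium potential'' and Farkas--Nagy is shaky as stated; however, your earlier and correct observation that the equilibrium potential is \emph{constant on} $S^{N-1}$ already suffices to close the sandwich $\inf_{S^{N-1}}U_\alpha^{\mu}\le\int U_\alpha^{\mu}\,d\mu_\alpha=\int U_\alpha^{\mu_\alpha}\,d\mu=W_\alpha(S^{N-1})$ for any unit measure $\mu$ supported on the sphere, making the extra citation unnecessary.
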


For the unit ball we have the following result.

\begin{prop}\label{ballprop}
For the unit ball $B^N$ in $\mathbb{R}^N$ there holds, for all $-\infty<\alpha <N,$
\begin{equation}\label{mainball}
1-\frac{M_m^{N-\alpha} ( B^N)}{m}\geq C_{B^N}^{\delta}(\alpha,m)\geq 2^{\alpha-N}-\frac{M_m^{N-\alpha} ( B^N)}{m}.
\end{equation}
 Moreover the following asymptotic formulas hold as $m\to \infty:$

\begin{equation}\label{asymball}
C_{B^N}^{\delta}(\alpha,m) \sim
\begin{cases}
 \displaystyle{-\sigma(N-\alpha ,N )\left(\frac{\Gamma(1+N/2)}{\pi^{N/2}} \right)^{(N-\alpha)/N}m^{-\alpha/N}} \,, \quad
\enskip 0 > \alpha >-\infty\,, \\
\displaystyle {- \log m}\,, \quad   \enskip \alpha = 0\,, \\
\end{cases}
\end{equation}
where $\sigma(N-\alpha,N)$ is a positive constant that depends only on $\alpha$ and $N$.
\end{prop}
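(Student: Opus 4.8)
The plan is to obtain the two-sided estimate \eqref{mainball} from elementary bounds on the farthest-distance function of the ball, and then to read off the asymptotics \eqref{asymball} by substituting the known polarization asymptotics for $B^N$ into that sandwich.

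First I would record the exact value of $C_{B^N}^{\delta}(\alpha,m)$. Since $\alpha-N<0$, for each fixed $x_j$ we have $\inf_{x\in E}|x-x_j|^{\alpha-N}=\big(\sup_{x\in E}|x-x_j|\big)^{\alpha-N}=d_E^{\alpha-N}(x_j)$, while $\inf_{x\in E}\sum_j|x-x_j|^{\alpha-N}=M^{N-\alpha}(A_m,E)$. Hence the best additive constant is
\[
C_E^{\delta}(\alpha,m)=\inf_{A_m\subset E}\Big[\tfrac1m\sum_{j=1}^m d_E^{\alpha-N}(x_j)-\tfrac1m M^{N-\alpha}(A_m,E)\Big].
\]
For $E=B^N$ one has $d_{B^N}(x)=1+|x|$, so $1\le d_{B^N}(x_j)\le 2$ and therefore $2^{\alpha-N}\le d_{B^N}^{\alpha-N}(x_j)\le 1$ for every $x_j\in B^N$, the value $1$ being attained at the center. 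The lower bound in \eqref{mainball} then follows by estimating each summand $d_{B^N}^{\alpha-N}(x_j)\ge 2^{\alpha-N}$ and $M^{N-\alpha}(A_m,B^N)\le M_m^{N-\alpha}(B^N)$ before taking the infimum; the upper bound follows by evaluating the bracket at a configuration $A_m^{*}$ attaining $M_m^{N-\alpha}(B^N)$ and using $\tfrac1m\sum_j d_{B^N}^{\alpha-N}(x_j^{*})\le 1$.

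Next I would reduce the asymptotics to the growth of the polarization term. Writing $P_m:=M_m^{N-\alpha}(B^N)/m$, the bound \eqref{mainball} reads $2^{\alpha-N}-P_m\le C_{B^N}^{\delta}(\alpha,m)\le 1-P_m$. For $\alpha\le 0$ the exponent $s:=N-\alpha\ge N$ lies in the critical/hypersingular range, where $P_m\to\infty$; thus the fixed gap $1-2^{\alpha-N}$ is negligible against $P_m$, and dividing through by $-P_m$ gives $C_{B^N}^{\delta}(\alpha,m)\sim -P_m=-M_m^{N-\alpha}(B^N)/m$. Everything then hinges on the asymptotics of $M_m^{N-\alpha}(B^N)$.

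Finally I would import the polarization asymptotics for the $N$-dimensional body $B^N$ (cf.\ \cite{BHS4}). For $\alpha<0$, i.e.\ $s>N$, the hypersingular regime gives $M_m^{s}(B^N)\sim\sigma(s,N)\,V^{-s/N}m^{s/N}$, where $V=\pi^{N/2}/\Gamma(1+N/2)$ is the volume of $B^N$ and $\sigma(s,N)>0$ is the optimal polarization constant; dividing by $m$ and using $s/N-1=-\alpha/N$ together with $V^{-s/N}=\big(\Gamma(1+N/2)/\pi^{N/2}\big)^{(N-\alpha)/N}$ yields the first line of \eqref{asymball}. For $\alpha=0$, i.e.\ $s=N$, the critical case $M_m^{N}(E)/(m\log m)\to \beta_N/\mathcal{H}_N(E)$, with $\beta_N$ the volume of the unit ball in $\amb$, specializes (since $\mathcal{H}_N(B^N)=\beta_N=V$) to $P_m\sim\log m$, which is the second line of \eqref{asymball}. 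The one genuinely delicate point is invoking these deep polarization theorems in exactly the normalized form stated and checking that the critical normalization forces the coefficient to equal $1$; the remainder is the elementary sandwich above.
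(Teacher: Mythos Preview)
Your argument is correct and is precisely the expansion of what the paper sketches: the paper only states that Propositions~\ref{sphereprop} and~\ref{ballprop} are ``straightforward consequences of the main theorems on polarization proved in \cite{erdsaff}, \cite{borod} and \cite{BHS4},'' together with the general upper bound $\max_{A_m}\frac{1}{m}\sum d_E^{\alpha-N}(x_j)-M_m^{N-\alpha}(E)/m\ge C_E^\delta(\alpha,m)$ already recorded in Section~3. Your derivation of \eqref{mainball} via the explicit formula $C_E^\delta(\alpha,m)=\inf_{A_m}\bigl[\tfrac1m\sum d_E^{\alpha-N}(x_j)-\tfrac1m M^{N-\alpha}(A_m,E)\bigr]$ and the bounds $2^{\alpha-N}\le d_{B^N}^{\alpha-N}\le 1$, followed by substitution of the polarization asymptotics from \cite{borod,BHS4}, is exactly the intended route.
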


We remark that asymptotic formulas similar to those in Proposition \ref{ballprop} can be obtained for  $C_E^{\delta}(\alpha,m)$
for a large class of $N$-dimensional subsets of $\mathbb{R}^N$ by appealing to the results in \cite{borod} and \cite{BHS4}.

\section{\textmd{Proofs}}\label{sec:Proofs}

We begin with a lemma that will be used in the proof of Theorem \ref{thm:2.3}. Let $F_n = \{x_{k, n}\}_{k=1}^n$ be a set of $n$ points in $E$. Let $\tau_n$ be their normalized counting measure and let $0<\alpha<N$. We define the \emph{discrete $\alpha$-energy} of $\tau_n$ by
\begin{displaymath}
E_\alpha[\tau_{n}] := \frac{2}{n(n-1)} \sum_{1\leq j<k\leq n} |x_{j, n}-x_{k, n}|^{\alpha-N}.
\end{displaymath}
As $E$ is compact, the minimum discrete $\alpha$-energy is achieved by some set of points. Let $\fek_n = \{\xi_{k, n}\}_{k=1}^n$, be a set of $n$ points in $E$ that minimizes the discrete $\alpha$-energy. For $\alpha = 2$, these are typically called the Fekete points. They provide a way to approximate the $\alpha$-equilibrium measure.

\begin{lem}\label{lem:liminfFeketepts}
Given $0<\alpha< N$, let $\fek_n:=\{\xi_{k, n}\}_{k=1}^n$ be the points of $E$ minimizing the discrete $\alpha$-energy. Let $\tau_n$ be the normalized counting measure associated with the set $\fek_n$. Then the discrete $\alpha$-energies of the measures $\tau_n$ increase monotonically and converge weak$^*$ to the $\alpha$-equilibrium measure $\mu_\alpha$. Further,
\begin{displaymath}
\lim_{n\to\infty}\inf_E U_\alpha^{\tau_n} = \lim_{n\to\infty}\inf_{x\in E} \frac{1}{n}\sum_{k=1}^n |x-\xi_{k, n}|^{\alpha-N} = W_\alpha(E).
\end{displaymath}
\end{lem}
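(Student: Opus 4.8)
The plan is to establish three facts in sequence, writing $W_n := E_\alpha[\fek_n]$ for the minimal discrete $\alpha$-energy and $k(x,y):=|x-y|^{\alpha-N}$: (i) the sequence $W_n$ increases; (ii) $W_n \to W_\alpha(E)$ together with $\tau_n \to \mu_\alpha$ weak$^*$; and (iii) $\inf_E U_\alpha^{\tau_n} \to W_\alpha(E)$. Throughout I assume $W_\alpha(E)<\infty$ (positive $\alpha$-capacity), since otherwise every quantity tends to $+\infty$ and there is nothing to identify. The whole argument rests on the standard transfinite-diameter machinery adapted to the Riesz kernel $k$, which for $0<\alpha<N$ is positive, lower semicontinuous, and strictly positive definite.

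For monotonicity I would use the deletion-averaging trick. Writing $\fek_{n+1}=\{\xi_1,\dots,\xi_{n+1}\}$ and deleting each point in turn produces $n+1$ configurations of $n$ points, each of energy at least $W_n$ by minimality; summing and noting that every pair survives in exactly $n-1$ of the deletions gives $(n+1)W_{n+1}=\sum_i E_\alpha[\fek_{n+1}\setminus\{\xi_i\}]\ge (n+1)W_n$, hence $W_{n+1}\ge W_n$. For the upper bound $W_n\le W_\alpha(E)$, I would integrate the pair energy against the product measure $\mu_\alpha\otimes\mu_\alpha$: since the expected pair energy of points drawn from $\mu_\alpha$ equals $\iint k\,d\mu_\alpha\,d\mu_\alpha=W_\alpha(E)$, some $n$-point configuration has energy at most $W_\alpha(E)$, and minimality gives $W_n\le W_\alpha(E)$.

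The crux is the matching lower bound $\liminf_n W_n\ge W_\alpha(E)$, which simultaneously produces the weak$^*$ limit. By weak$^*$ compactness of probability measures on the compact set $E$, pass to a subsequence with $\tau_{n_j}\to\mu$. For a truncation level $M>0$ the kernel $k_M:=\min(k,M)$ is bounded and continuous, so $\iint k_M\,d\tau_{n_j}\,d\tau_{n_j}\to\iint k_M\,d\mu\,d\mu$; subtracting the $O(1/n)$ diagonal contribution (the self-terms contribute $nM$, and the normalization factor is $n/(n-1)\to1$) shows $\liminf_j W_{n_j}\ge\iint k_M\,d\mu\,d\mu$, and letting $M\to\infty$ by monotone convergence gives $\liminf_j W_{n_j}\ge\iint k\,d\mu\,d\mu\ge W_\alpha(E)$. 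Since $W_n$ is monotone with limit at most $W_\alpha(E)$, this forces $\lim_n W_n=W_\alpha(E)$ and $\iint k\,d\mu\,d\mu=W_\alpha(E)$; strict positive definiteness of the Riesz kernel makes the energy minimizer unique, so $\mu=\mu_\alpha$. As every weak$^*$-convergent subsequence has the same limit, $\tau_n\to\mu_\alpha$. I expect this lower-semicontinuity/truncation step, together with the appeal to uniqueness of the equilibrium measure, to be the main obstacle, since it is where the positivity and positive definiteness of the kernel are genuinely used.

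Finally, for the potential I would sandwich $\inf_E U_\alpha^{\tau_n}$ between $W_n$ and $W_\alpha(E)$. The upper estimate is Frostman's Theorem \ref{thm:Frostman}: by symmetry of the kernel, $\inf_E U_\alpha^{\tau_n}\le\int U_\alpha^{\tau_n}\,d\mu_\alpha=\int U_\alpha^{\mu_\alpha}\,d\tau_n\le W_\alpha(E)$, using $U_\alpha^{\mu_\alpha}\le W_\alpha(E)$. For the lower estimate I would augment $\fek_n$ by a minimizer $x^*\in E$ of $g_n(x):=\sum_{k=1}^n|x-\xi_{k,n}|^{\alpha-N}$ (attained, and finite, since $g_n$ is lower semicontinuous on $E$ and not identically $+\infty$, forcing $x^*\notin\fek_n$); the resulting $(n+1)$-point energy is at least $\binom{n+1}{2}W_{n+1}$ yet equals $\binom{n}{2}W_n+g_n(x^*)=\binom{n}{2}W_n+n\inf_E U_\alpha^{\tau_n}$, whence
\[
\inf_E U_\alpha^{\tau_n}\ \ge\ \tfrac{n+1}{2}\,W_{n+1}-\tfrac{n-1}{2}\,W_n\ \ge\ W_n,
\]
the last step using $W_{n+1}\ge W_n$. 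Since $W_n\to W_\alpha(E)$, the squeeze $W_n\le\inf_E U_\alpha^{\tau_n}\le W_\alpha(E)$ yields the claimed limit, completing the proof.
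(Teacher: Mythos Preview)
Your proof is correct and follows essentially the same route as the paper. The paper simply cites Landkof for parts (i) and (ii)—monotonicity of $W_n$ and weak$^*$ convergence $\tau_n\to\mu_\alpha$—whereas you supply the standard self-contained arguments (deletion averaging, truncation plus lower semicontinuity, uniqueness of the energy minimizer); for part (iii) both proofs are identical in substance: the upper bound is Frostman plus Tonelli, and the lower bound comes from adjoining a point $x\in E$ to $\fek_n$, comparing with the minimal $(n{+}1)$-energy, and invoking $W_{n+1}\ge W_n$ to obtain $\inf_E U_\alpha^{\tau_n}\ge W_n$.
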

\begin{proof}


The facts that the discrete energies of the measures $\tau_n$ increase monotonically and converge weak$^*$ to the equilibrium measure are proved in \cite[p.~160-162]{landkof}. Since $\tau_n$ is a unit measure, we may apply Tonelli's Theorem followed by Frostman's Theorem \ref{thm:Frostman} to find
\begin{displaymath}
\int U^{\tau_n}_\alpha d\mu_\alpha = \int U^{\mu_\alpha}_\alpha d\tau_n \leq W_\alpha(E).
\end{displaymath}
Since $\supp(\mu_\alpha) \subset E$, this implies
\begin{displaymath}
\inf_E U_\alpha^{\tau_n} \leq W_\alpha(E).
\end{displaymath}
On the other hand, for the $(n+1)$-tuple $(x, \xi_{1, n}, \ldots, \xi_{n, n}) \subset E$ we may again apply the extremal property of $\fek_n$ to obtain
\begin{displaymath}
\sum_{1\leq j < k \leq n+1} |\xi_{j, n+1}- \xi_{k, n+1}|^{\alpha-N} \leq
\sum_{k=1}^n |x - \xi_{k, n}|^{\alpha-N}
+ \sum_{1\leq j < k \leq n} |\xi_{j, n}- \xi_{k, n}|^{\alpha-N}.
\end{displaymath}
Further, monotonicity of discrete energies gives that
\begin{align*}
\sum_{k=1}^n |x - \xi_{k, n}|^{\alpha-N} &\geq \frac{n(n+1)}{n(n+1)}\sum_{1\leq j < k \leq n+1} |\xi_{j, n+1}- \xi_{k, n+1}|^{\alpha-N} -  \sum_{1\leq j < k \leq n} |\xi_{j, n}- \xi_{k, n}|^{\alpha-N} \\
&\geq \frac{n(n+1)}{n(n-1)}\sum_{1\leq j < k \leq n} |\xi_{j, n}- \xi_{k, n}|^{\alpha-N} -  \sum_{1\leq j < k \leq n} |\xi_{j, n}- \xi_{k, n}|^{\alpha-N} \\
&= \frac{2}{(n-1)}\sum_{1\leq j < k \leq n} |\xi_{j, n}- \xi_{k, n}|^{\alpha-N},
\end{align*}
which immediately implies that
\begin{displaymath}
W_\alpha(E) \geq \inf_{x\in E} \frac{1}{n} \sum_{k=1}^n |x-\xi_{k, n}|^{\alpha-N} \geq E_\alpha[\tau_n]\to W_\alpha(E)  \text{ as } n\to\infty.
\end{displaymath}
\end{proof}

\begin{proof}[Proof of Theorem \ref{thm:drep}]
As discussed following the statement of Theorem \ref{thm:drep}, if $E$ is finite and $\alpha = 2$, then $d^{2-N}_E$ is the Newtonian potential of a unique Borel measure $\sigma_2$. A similar proof will show that, for $0<\alpha<2$, $d^{\alpha-N}_E$ is the  Riesz $\alpha$-potential of a unique Borel measure. Note that for each $v\in E$, $|x-v|^{\alpha-N}$ is a Riesz kernel and thus is $\alpha$-superharmonic \cite[p.~113]{landkof}. Since $d_E^{\alpha-N}(x) = \min_{v\in E} |x-v|^{\alpha-N}$ is a minimum of finitely many $\alpha$-superharmonic functions, it is also $\alpha$-superharmonic \cite[p.~129]{landkof}. Hence there exists a unique positive Borel measure $\sigma_\alpha$, such that $d_E^{\alpha-N}(x) = U^{\sigma_\alpha}_\alpha(x) + A$ for some constant $A\geq 0$ \cite[p.~117]{landkof}. Since  $d_E^{\alpha-N}(x)$ tends to zero as $x\to\infty$ and $U^{\sigma_\alpha}_\alpha(x)$ is positive everywhere, we conclude that $A = 0$. Thus $d_E^{\alpha-N}(x) = U^{\sigma_\alpha}_\alpha(x)$ for all $x\in\amb.$

If $E$ is compact, we consider a sequence of finite subsets $E_m \subset E_{m+1} \subset E$ that are dense in $E$ as $m\to\infty$. Let $d_m$ be the farthest distance function of $E_m$ and let $\sigma_m$ be the associated measure such that $d_m^{\alpha-N} = U^{\sigma_m}_\alpha,\ m\in\N$. Since $d_m\leq d_{m+1}$, it follows that $U^{\sigma_m}_\alpha \geq U^{\sigma_{m+1}}_\alpha,\ m\in\N$. Thus we obtain a decreasing sequence of potentials, and  Theorem 3.10 of \cite{landkof} gives a positive unique Borel measure $\sigma_\alpha$ such that $\sigma_m \stackrel{*}{\rightarrow} \sigma_\alpha$ and $d_E^{\alpha-N} = U^{\sigma_\alpha}_\alpha$ quasi-everywhere. Since the set of points $S$ where $d_E^{\alpha-N} \neq U^{\sigma_\alpha}_\alpha$ has $\alpha$-capacity zero, it also has zero volume in $\amb$, see \cite[Theorem 3.13 on p.~196]{landkof}. Hence $U^{\sigma_\alpha}_\alpha \ast \vepm = d_E^{\alpha-N} \ast \vepm$ for the averaging  measure $\vepm$ used in the definition of $\alpha$-superharmonicity in \cite[p.~112]{landkof}. Furthermore, Property (i) \cite[p.~114]{landkof} for $\alpha$-superharmonic functions gives that
\begin{displaymath}
U^{\sigma_\alpha}_\alpha(x) = \lim_{r\to 0} U^{\sigma_\alpha}_\alpha \ast \vepm (x) = \lim_{r\to 0} d_E^{\alpha-N}\ast \vepm (x) = d_E^{\alpha-N}(x), \quad x\in\amb,
\end{displaymath}
where we used the fact that $\vepm \stackrel{*}{\rightarrow} \delta_0$ as $r\to 0$ \cite[p.~112]{landkof} on the last step.

We now consider the mass of the measure $\sigma_\alpha$. Assume, without loss of generality, that the origin is a point in $E$. Consider the ball $B(R)$ of radius $R>\text{diam}(E)$ about the origin. We average
\begin{equation*}
d_E^{\alpha-N}(x) = \int_\amb  |x-t|^{\alpha-N} d\sigma_\alpha(t)
\end{equation*}
with respect to the $\alpha$-equilibrium measure $\tau_R$ of the ball $B(R)$, to obtain
\begin{equation}\label{averagetogetunitmeasure}
M(R) := \int_{B(R)} d_E^{\alpha-N}(x)  d\tau_R(x) =  \int_{B(R)}  \int_\amb  |x-y|^{\alpha-N} d\sigma_\alpha(y) d\tau_R(x).
\end{equation}

We  begin with the case $\alpha = 2$, for which the $\alpha$-equilibrium measure is the normalized surface area measure $d\tau_R =  dx/(\omega_N R^{N-1})$. It is a standard fact \cite[p.~100]{armitageandgardiner} that the potential of the equilibrium measure is given by
\begin{equation*}
U_2^{\tau_R}(x) = \frac{1}{\omega_N R^{N-1}} \int |x-t|^{2-N} dt = \begin{cases}
R^{2-N} & \text{if $|x|\leq R$},\\
|x|^{2-N} & \text{if $|x|> R$}.
\end{cases}
\end{equation*}
Consider the left hand side of \eqref{averagetogetunitmeasure}. We know $R \leq d_E(x) \leq R + \text{diam}(E)$ on  $\supp(\tau_R) = \partial B(R)$. Thus
\begin{equation}\label{eq:mean}
(R + \text{diam}(E))^{2-N} \leq M(R) \leq R^{2-N}.
\end{equation}
On the other hand, we may apply Tonelli's Theorem to the right hand side of \eqref{averagetogetunitmeasure} and obtain
\begin{align*}
\int_{\partial B(R)}  \int_\amb  |x-t|^{2-N} d\sigma_2(t) d \tau_R(x)  &=  \int_\amb \int_{\partial B(R)}    |x-t|^{2-N} d \tau_R(x) d\sigma_2(t) \\
& = \int_\amb U_2^{\tau_R}(y) d\sigma_2(y) \\
& = \int_{|t|<R} U_2^{\tau_R}(t) d\sigma_2(t) + \int_{|t|>R} U_2^{\tau_R}(t) d\sigma_2(t) \\
& = R^{2-N} \sigma_2(B(R)) + \int_{|t|>R} U_2^{\tau_R}(t) d\sigma_2(t).
\end{align*}
Since $0 < U_2^{\tau_R}(t) < R^{2-N}$ for $|t|>R$, we have
\begin{displaymath}
R^{2-N} \sigma_2(B(R)) \leq M(R) \leq R^{2-N} \sigma_2(\amb).
\end{displaymath}
Combining the above inequality with \eqref{eq:mean} and letting $R\to\infty$, we obtain $\sigma_2(\amb) = 1$.

The proof in the case of $0<\alpha<2$ is similar. In this case, the $\alpha$-equilibrium measure is given in \cite[p.~163]{landkof} as
\begin{displaymath}
d\tau_R(x) = A  R^{\alpha-N} (R^2-|x|^2)^{-\alpha/2}dx \text{ for $|x|<R$},
\end{displaymath}
where $A$ is the constant
\begin{displaymath}
A = \frac{\Gamma\left(\frac{N-\alpha}{2} + 1 \right)}{\pi^{N/2}\Gamma\left(1- \frac{\alpha}{2} \right)}.
\end{displaymath}
Its potential $U_\alpha^{\tau_R}(x) = \int |x-t|^{\alpha-N} d\tau_R(t)$ is
\begin{displaymath}
U_\alpha^{\tau_R}(y) = A  R^{\alpha-N} \frac{\pi^{N/2+1}}{\Gamma(N/2) \sin(\pi\alpha/2)},
\end{displaymath}
for all $|y|\le R$ \cite[(A.1)]{landkof}. Using the fact that
\begin{displaymath}
\frac{\pi}{\sin(\pi x)} = \Gamma(x)\Gamma(1-x),
\end{displaymath}
we calculate for $|y|\le R$ that
\begin{align*}
U_\alpha^{\tau_R}(y) =&  \frac{\Gamma\left((N-\alpha)/2 + 1 \right)}{\pi^{N/2}\Gamma\left(1- \alpha/2 \right)} R^{\alpha-N} \frac{\pi^{N/2}}{\Gamma(N/2)} \Gamma(\alpha/2)\Gamma(1-\alpha/2) \\
=&   \frac{\Gamma(\alpha/2) \Gamma\left((N-\alpha)/2 + 1 \right)}{\Gamma(N/2)} \ R^{\alpha-N}.
\end{align*}
Introducing the notation
\begin{displaymath}
c(N, \alpha) = A \frac{\pi^{N/2+1}}{\Gamma(N/2) \sin(\pi\alpha/2)} = \frac{\Gamma(\alpha/2) \Gamma\left((N-\alpha)/2 + 1 \right)}{\Gamma(N/2)},
\end{displaymath}
we obtain
\begin{equation}\label{eq:apot}
U_\alpha^{\tau_R}(y) = c(N, \alpha) R^{\alpha-N}, \quad \text{for all $|y|\leq R$.}
\end{equation}
Furthermore, this same value serves as the upper bound of the potential for all $|y|>R$. Notice that $c(N, 2) = 1$ and hence \eqref{eq:apot} is a generalization of the fact that $U_2^{\tau_R}(x) = R^{2-N}$ for $|x|\leq R$ when $\alpha=2$.

Consider the left hand side of \eqref{averagetogetunitmeasure}. We know $|x| \leq d_E(x) \leq |x| + \text{diam}(E)$ in $B(R)$. We use the lower bound on $d_E$ to find an upper bound on $M(R)$. Applying the calculations in \cite[Appendix]{landkof} again, we conclude that
\begin{align*}
\int_{B(R)} d_E^{\alpha-N}(x)  d\tau_R(x) &\leq \int_{B(R)} |x|^{\alpha-N}  d\tau_R(x) \\
&= A R^{\alpha-N} \int_{B(R)} (R^2-|x|^2)^{-\alpha/2} |x|^{\alpha-N}  dx \\
&= A  R^{\alpha-N} \frac{\pi^{N/2+1}}{\Gamma(N/2) \sin(\pi\alpha/2)} \\
&=  c(N, \alpha) R^{\alpha-N}.
\end{align*}
Next we use the upper bound on $d_E$ to obtain a lower bound for $M(R)$. Let $d=\text{diam}(E)$. Then for any $\epsilon>0$ we have $d\le \epsilon |x|$ for any $x$ not in $B(d/\epsilon)$. Hence
\begin{align*}
\int_{B(R)} d_E^{\alpha-N}(x)  d\tau_R(x) &\geq \int_{B(R)} (|x|+d)^{\alpha-N}  d\tau_R(x) \\
&> \int_{B(R) \setminus B(d/\epsilon)} (|x|+d)^{\alpha-N}  d\tau_R(x) \\
&\geq \int_{B(R) \setminus B(d/\epsilon)} |x|^{\alpha-N} \left(1+\epsilon \right)^{\alpha-N}  d\tau_R(x) \\
&=\left(1+\epsilon \right)^{\alpha-N} U_\alpha^{\tau_R}(0) -  \left(1+\epsilon \right)^{\alpha-N} \int_{B(d/\epsilon)} |x|^{\alpha-N}  d\tau_R(x) \\
&= \left( 1+\epsilon \right)^{\alpha-N}  c(N, \alpha) R^{\alpha-N} -  \left(1+\epsilon \right)^{\alpha-N} \int_{B(d/\epsilon)} |x|^{\alpha-N}  d\tau_R(x)
\end{align*}
Estimating the integral over the ball $B(d/\epsilon)$, we find
\begin{align*}
\int_{B(d/\epsilon)} |x|^{\alpha-N}  d\tau_R(x) =& R^{\alpha-N}A\omega_N \int^{d/\epsilon}_0 |x|^{\alpha-1}  (R^2-|x|^2)^{-\alpha/2} d|x| \\
\leq& R^{\alpha-N} \left(R^2-\frac{d^2}{\epsilon^2}\right)^{-\alpha/2} \frac{A\omega_N d^\alpha}{\alpha\epsilon^\alpha}.
\end{align*}
Since the above integral is also bounded below by zero, it follows that it is  $\mathcal{O}(R^{-N})$ and thus
\begin{equation}\label{eq:ineq1}
\left( 1+\epsilon \right)^{\alpha-N}  c(N, \alpha) R^{\alpha-N} -  \mathcal{O}(R^{-N}) < M(R)  \leq c(N, \alpha) R^{\alpha-N}.
\end{equation}

On the other hand, we may apply Tonelli's Theorem on the right hand side of \eqref{averagetogetunitmeasure} to obtain
\begin{align*}
\int_{B(R)}  \int_\amb  |x-y|^{\alpha-N} d\sigma_\alpha(y) d \tau_R(x)  &=  \int_\amb \int_{B(R)}    |x-y|^{\alpha-N} d \tau_R(x) d\sigma_\alpha(y) \\
& = \int_\amb U_\alpha^{\tau_R}(y) d\sigma_\alpha(y) \\
& = \int_{|y|\le R} U_\alpha^{\tau_R}(y) d\sigma_\alpha(y) + \int_{|y|>R} U_\alpha^{\tau_R}(y) d\sigma_\alpha(y).
\end{align*}
Applying the calculation of the potential in \eqref{eq:apot}, we find
\begin{equation}\label{eq:ineq2}
c(N, \alpha) R^{\alpha-N} \sigma_\alpha(B(R)) \leq M(R) < c(N, \alpha) R^{\alpha-N} \sigma_\alpha(\amb).
\end{equation}

Combining \eqref{eq:ineq1} and \eqref{eq:ineq2}, dividing by $R^{\alpha-N}$ and then letting $R\to\infty$, we obtain
\begin{displaymath}
\left( 1+\epsilon \right)^{\alpha-N}  \leq  \sigma_\alpha(\amb) \leq 1.
\end{displaymath}
Finally, we conclude $\sigma_\alpha(\amb) = 1$ by letting $\epsilon\to 0$.
\end{proof}

\begin{proof}[Proof of Theorem \ref{thm:2.3}]

For any positive Borel measure $\mu$, the potential
\begin{displaymath}
U_\alpha^{\mu}(t) = \int |t-x|^{\alpha-N} d\mu(x)
\end{displaymath}
is lower semicontinuous \cite[p.~59]{landkof}, and hence attains its infimum on the compact set $E$.  Thus we may choose $c_k \in E$ such that
\begin{displaymath}
\inf_E U_\alpha^{\nu_k} =U_\alpha^{\nu_k}(c_k)
\end{displaymath}
for each $k=1,\ldots, m$. It follows that
\begin{align*}
\sum_{k=1}^m \inf_E U_\alpha^{\nu_k} =& \sum_{k=1}^m U_\alpha^{\nu_k}(c_k) \\
=& \sum_{k=1}^m  \int |c_k-x|^{\alpha-N} d\nu_k(x)\\
\geq &   \int \min_{1\leq k \leq m} |c_k-x|^{\alpha-N} d\nu(x) \\
=& \int \left( \max_{1\leq k \leq m} |c_k-x| \right)^{\alpha-N} d\nu(x).
\end{align*}

The function $d_m(x) := \max_{1\leq k \leq m} |c_k-x|$ is the farthest distance function on the set of points $c_k$. By  Theorem \ref{thm:drep}, there exists a probability measure $\sigma_\alpha$ such that $U_\alpha^{\sigma_\alpha}(x) = d_m^{\alpha-N}(x)$. Applying Tonelli's Theorem, we have
\begin{displaymath}
\sum_{k=1}^m \inf_E U_\alpha^{\nu_k} \geq   \int U_\alpha^{\sigma_\alpha}(x) d\nu(x) = \int U_\alpha^\nu(t) d\sigma_\alpha(t).
\end{displaymath}
We estimate the potential $U_\alpha^\nu$ on $\amb$. Let $\mu_\alpha$ be the $\alpha$-equilibrium measure for $E$ and let $W_\alpha(E)$ be the $\alpha$-energy for $E$. Let $g(t) := U_\alpha^{\mu_\alpha}(t) - W_\alpha(E)$. By Frostman's Theorem \ref{thm:Frostman}, we know $g(t)\leq 0$ everywhere. On the other hand, $U_\alpha^\nu(t) - \inf_E U_\alpha^\nu \geq 0$ for $t\in E$. Thus
\begin{displaymath}
U_\alpha^\nu(t) \geq \inf_E U_\alpha^\nu + U_\alpha^{\mu_\alpha}(t) - W_\alpha(E)
\end{displaymath}
on $E$. It follows by the Principle of Domination \cite[Theorem 1.27 on p.~110 for $\alpha = 2$ and Theorem 1.29 on p.~115 for $0<\alpha<2$]{landkof} that this inequality holds in $\amb$. Thus, noting that $\sigma_\alpha$ is a unit measure and again applying Tonelli's Theorem, we find
 \begin{align*}
\sum_{k=1}^m \inf_E U_\alpha^{\nu_k} &\geq \int U_\alpha^\nu(t) d\sigma_\alpha(t) \\
&\geq \int \left( \inf_E U_\alpha^\nu  + \int U^{\mu_\alpha}(t) - W_\alpha(E) \right) d\sigma_\alpha(t) \\
&= \inf_E U_\alpha^\nu  + \int U^{\sigma_\alpha}_\alpha(x) d\mu_\alpha(x)- W_\alpha(E) \\
&= \inf_E U_\alpha^\nu  + \int  d_m^{\alpha-N}(x)d\mu_\alpha(x) - W_\alpha(E).
\end{align*}
By minimizing over all $m$-tuples $c_k$, we conclude that
\begin{displaymath}
\sum_{k=1}^m \inf_E U^{\nu_k} \geq C_E(\alpha, m) + \inf_E \sum_{k=1}^m  U^{\nu_k},
\end{displaymath}
where
\begin{displaymath}
C_E(\alpha, m) := \min_{c_k \in E} \int \min_{1\leq k\leq m} |x-c_k|^{2-N} d\mu_\alpha(x) - W_\alpha(E).
\end{displaymath}

We now show $C_E(\alpha, m)$ is the largest possible constant for a fixed $m$. We present two proofs of this fact. We begin with the shorter one which requires $E$ to be regular in the sense that $U^{\mu_\alpha}_\alpha(x) = W_\alpha(E)$ for all $x\in E$. Choose a set $c_k^*, \ k=1,\dots, m$, such that $\int  d_m^{\alpha-N}(x)d\mu_\alpha(x)$ attains its minimum on $E^m$. Let $d_m^*(x) := \min_{1\leq k\leq m} |x-c_k^*|$ and iteratively define the sets
\begin{align*}
S_1 =& \{x\in \supp(\mu_\alpha): |x-c_1^*| = d_m^*(x)\}, \\
S_k =& \{x\in \supp(\mu_\alpha) \setminus \cup_{j=1}^{k-1} S_j: |x-c_k^*| = d_m^*(x)\}, \ \text{$k=2,\ldots, m$.}
\end{align*}
It is clear that
\begin{displaymath}
\supp(\mu_\alpha) = \cup_{k=1}^m S_k \text{ and } S_k \cap S_j = \emptyset, \ k \neq j.
\end{displaymath}
Hence we can decompose $\mu_\alpha$ along the sets $S_k$ such that
\begin{displaymath}
\nu_k^* := \mu_\alpha |_{S_k} \text{ and } \mu_\alpha = \sum_{k=1}^m \nu_k^*.
\end{displaymath}
If $E$ is regular, then $\int |x-t|^{\alpha-N} d\mu_\alpha(t) = W_\alpha(E)$ for each $x \in E$ by Frostman's Theorem. Applying this fact, along with Tonelli's Theorem, we obtain
\begin{align*}
\sum_{k=1}^m \inf_E U_\alpha^{\nu_k^*} &\leq \sum_{k=1}^m U_\alpha^{\nu_k^*}(c_k^*) \\
&= \sum_{k=1}^m \int |c_k^*-x|^{\alpha-N} d\nu_k^*(x) \\
&= \sum_{k=1}^m \int (d_m^*(x))^{\alpha-N} d\nu_k^*(x) \\
&= \int (d_m^*(x))^{\alpha-N} d\mu_\alpha(x) \\
&= \int (d_m^*(x))^{\alpha-N} d\mu_\alpha(x) -W_\alpha(E) + \inf_{t\in E}\int |x-t|^{\alpha-N} d\mu_\alpha(x) \\
&= C_E(\alpha, m) + \sum_{k=1}^m \inf_E U_\alpha^{\nu_j^*}.
\end{align*}
Hence $C_E(\alpha, m)$ is sharp. The alternative proof uses points minimizing the discrete $\alpha$-energy and does not require that $E$ be regular. Let $\mathcal{F}_n = \{ \xi_{l, n} \}_{l=1}^n$ be the points of $E$ which minimize the discrete $\alpha$-energy. We will break the set $\mathcal{F}_n$ up using the points $c_k^*$ just as we broke up $\supp(\mu_\alpha)$ previously. Let $F_{k, n}$ be a subset of $\mathcal{F}_n$ such that $\xi_{l, n} \in \mathcal{F}_{l, n}$ if $d_m^*(\xi_{l, n}) = |\xi_{l, n}  - c_k^*|, \ 1\leq l \leq n$. If there is overlap between the sets, assign $\xi_{l, n}$ to only one set $\mathcal{F}_{k, n}$. It is clear that for any $n \in \mathbb{N}$,
\begin{displaymath}
\mathcal{F}_n = \cup_{k=1}^m \mathcal{F}_{k, n} \text{ and } \mathcal{F}_{k, n} \cap \mathcal{F}_{j, n} = \emptyset, \ k \neq j.
\end{displaymath}
Define the measures
\begin{displaymath}
\nu_{k, n}^* = \frac{1}{n} \sum_{\xi_{l, n} \in F_{k, n}} \delta_{\xi_{l, n}},
\end{displaymath}
so that for their potentials
\begin{displaymath}
p_{k, n}^*(x) = \frac{1}{n} \sum_{\xi_{l, n} \in F_{k, n}} |x-\xi_{l, n}|^{\alpha-N}, \ k=1,\ldots ,m,
\end{displaymath}
we have
\begin{displaymath}
\inf_E p_{k, n}^*(x) \leq \frac{1}{n} \sum_{\xi_{l, n} \in F_{k, n}} |c_k^*-\xi_{l, n}|^{\alpha-N} = \frac{1}{n} \sum_{\xi_{l, n} \in F_{k, n}} (d_m^*(\xi_{l, n}))^{\alpha-N}.
\end{displaymath}
It follows from the weak$^*$ convergence of $\nu_n := \sum_{k=1}^m \nu_{k, n}^* = \frac{1}{n} \sum_{l=1}^n \delta_{\xi_{l, n}}$ to $\mu_\alpha$, as $n\to \infty$, that
\begin{align*}
\limsup_{n\to \infty} \sum_{k=1}^m \inf_E p_{k, n}^*(x) &\leq \lim_{n\to\infty}\frac{1}{n} \sum_{k=1}^n (d_m^*(\xi_{k, n}))^{\alpha-N} \\
&= \int (d_m^*(x))^{\alpha-N} d\mu_\alpha(x).
\end{align*}
Applying Lemma \ref{lem:liminfFeketepts} to the potential $p_n^*$ of $\nu_n^*$ we find that
\begin{displaymath}
\lim_{n\to\infty}\inf_E p_n^*  = W_\alpha(E).
\end{displaymath}
It follows that
\begin{displaymath}
\limsup_{n\to\infty} \sum_{k=1}^m \inf_E p_n^* \leq C_E(\alpha, m) + \lim_{n\to\infty} \inf_E p_n^*.
\end{displaymath}
Hence we have asymptotic equality in \eqref{eq:infspotentials} as $n\to\infty$ with $m\geq 2$ being fixed, which shows that $C_E(\alpha, m)$ is the largest possible constant for each $m$. Since $d_m \leq d_E$ everywhere, we have $C_E(\alpha, m) \geq C_E(\alpha)$.
\end{proof}

\begin{proof}[Proof of Corollary \ref{cor:decreasingCEm}]
If $m< \card(\domset_E)$, then there is an $x_0 \in \supp(\mu_\alpha)$ such that $d_m^*(x_0) < d_E(x_0)$. As both functions are continuous, the same strict inequality holds in a neighborhood of $x_0$, so that $\int (d_m^*(x))^{\alpha-N} d\mu_\alpha(x) > \int d_E^{\alpha-N}(x) d\mu_\alpha(x)$ and hence $C_E(\alpha, m) > C_E(\alpha)$. This argument shows that if $\domset_E$ is infinite, then $C_E(\alpha, m) > C_E(\alpha)$ for $m\geq 2$. If $m\geq \card(\domset_E)$ then we may choose the points $c_k^*$ to include $\domset_E$ and hence $d_m^*(x) = d_E(x)$ for $x\in\supp(\mu_\alpha)$. Thus $C_E(\alpha, m) = C_E(\alpha)$.

Let $c_k$, $k=1,\ldots,m,$ be a set of points in $E$ that minimize the integral in the expression of $C_E(\alpha, m)$. Choose a point $c_{m+1}\in\partial E$. Then
\begin{align*}
C_E(\alpha, m) &= \int \min_{1\leq k\leq m} |x-c_k|^{\alpha-N} d\mu_\alpha(x) - W_\alpha(E) \\
&\geq \int \min_{1\leq k\leq m+1} |x-c_k|^{\alpha-N} d\mu_\alpha(x) - W_\alpha(E) \\
&\geq C_E(\alpha, m+1).
\end{align*}
Hence the constants $C_E(\alpha, m)$ are decreasing. It remains to show that their limit is $C_E(\alpha)$. Let $\{a_k\}_{k=1}^\infty$ be a countable dense subset of $E$. Then
\begin{displaymath}
C_E(\alpha) \leq C_E(\alpha,m) \leq \int \min_{1\leq k\leq m} |x-a_k|^{\alpha-N} d\mu_\alpha(x) - W_\alpha(E).
\end{displaymath}
Further, applying the Dominated Convergence Theorem, we have
\begin{align*}
\lim_{m\to\infty} \int \min_{1\leq k\leq m} |x-a_k|^{\alpha-N} d\mu_\alpha(x) &= \int \lim_{m\to\infty} \min_{1\leq k\leq m} |x-a_k|^{\alpha-N} d\mu_\alpha(x) \\
&= \int d_E^{\alpha-N}(x) d\mu_\alpha(x).
\end{align*}
The result follows.
\end{proof}

\begin{proof}[Proof of Corollary \ref{cor:smoothboundary}]
We show the minimal dominant set is infinite and then the result follows from Corollary \ref{cor:decreasingCEm}. Suppose to the contrary that $\domset_E=\{x_j\}_{j=1}^s$ is finite. Let $J\subset\partial E$ be a single connected component of the boundary. Define
\begin{displaymath}
J_k := \{x\in J : d_E(x) = |x-x_k|\}, k=1,\ldots , s.
\end{displaymath}
For each $x\in J_k$, the segment $[x, x_k]$ is orthogonal to $\partial E$ at $x_k$, by the smoothness assumption. Hence, each $J_k$ is contained in the normal line to $\partial E$ at $x_k$, $k=1,\ldots, s$. We thus obtain that $J=\cup_{k=1}^s J_k$ is contained in a union of straight lines which is a contradiction.
\end{proof}

\begin{proof}[Proof of Example \ref{ex:circle}]
To calculate the quantity $\min_{c_k \in \T} \int \min_{1\leq k\leq m} |x-c_k|^{\alpha-N} d\mu_\alpha(x)$, we follow an idea of Boyd \cite{boyd}. Let $c_k = -e^{i\psi_k},\ k=1,\ldots,m,$ with $\psi_k < \psi_{k+1},$ and for notational convenience let $\psi_0 = \psi_m$. Then we have $\max_{1\leq k\leq m} | e^{i\theta}-c_k| = |e^{i\theta} + e^{i\psi_k}| = |e^{i(\theta-\psi_k)} +1|$ for $\frac{\psi_{k-1} + \psi_k}{2} \leq \theta \leq \frac{\psi_k + \psi_{k+1}}{2}$ and hence
\begin{align*}
\int \min_{1\leq k\leq m} |x-c_k|^{\alpha-N} d\mu_\alpha(x) =& \frac{1}{2\pi} \sum_{k=1}^m \int_{\frac{\psi_{k-1} + \psi_k}{2}}^{\frac{\psi_{k} + \psi_{k+1}}{2}} |e^{i(\theta-\psi_k)} +1|^{\alpha-2} d\theta \\
=& \frac{1}{\pi} \sum_{k=1}^m \int_0^{\frac{\psi_{k} - \psi_{k-1}}{2}} |e^{i\theta} +1|^{\alpha-2} d\theta \\
=& \frac{2^{\alpha-2}}{\pi} \sum_{k=1}^m \int_0^{\frac{\psi_{k} - \psi_{k-1}}{2}} \cos^{\alpha-2}\left( \frac{\theta}{2} \right) d\theta \\
=& 2^{\alpha-2} \frac{2}{\pi} \sum_{k=1}^m I(\theta_k),
\end{align*}
where $\theta_k = \frac{\psi_{k} - \psi_{k-1}}{4}$ and $I(\theta_k) = \int_0^{\theta_k} \cos^{\alpha-2}(\theta) \ d\theta$. Since $I(\theta_k)$ is strictly convex for $0<\theta_k<\frac{\pi}{4}$, and $\sum_{k=1}^m \theta_k = \pi/2$, we have
\[
\frac{1}{m} \sum_{k=1}^m I(\theta_k) \ge I\left(\frac{\pi}{2m}\right).
\]
 Hence
\begin{displaymath}
\min_{c_k \in \T} \int \min_{1\leq k\leq m} |x-c_k|^{\alpha-N} d\mu_\alpha(x) = 2^{\alpha-2} \frac{2m}{\pi} I\left(\frac{\pi}{2m}\right),
\end{displaymath}
where the outer minimum is clearly attained for the equally spaced points $c_k$ on $\T.$
\end{proof}

\begin{proof}[Proof of Proposition \ref{circleprop}]
Equation (\ref{main}) is a consequence of (\ref{sphere}) and the main theorem proved in \cite{hks}.  The asymptotic formulas in (\ref{asym}) follow from (\ref{main}) and the asymptotics for $M_n^s(\mathbb{S}^1)$ given in \cite{hks}.
\end{proof}

 The proofs of Propositions \ref{sphereprop} and \ref{ballprop} are straightforward consequences of the main theorems on polarization proved in \cite{erdsaff},  \cite{borod} and \cite{BHS4}.

\medskip
{\bf Acknowledgements.} Research of I. E. Pritsker was partially supported by the U. S. National Security Agency under grant H98230-12-1-0227, and by the AT\&T Professorship. E. B. Saff was supported, in part, by the U. S. National Science Foundation under grant DMS-1109266. Research of W. Wise was done in partial fulfillment of the requirements for a PhD at Oklahoma State University, and with partial support from the AT\&T Professorship.

\emph{I. E. Pritsker}

Department of Mathematics,
Oklahoma State University,
401 Mathematical Sciences,
Stillwater, OK 74078-1058,
USA

igor@math.okstate.edu
\vspace{.1cm}

\emph{E. B. Saff}

Center for Constructive Approximation,
Department of Mathematics,
Vanderbilt~ University,
Nashville, TN 37240,
USA

edward.b.saff@vanderbilt.edu\
\vspace{.1cm}

\emph{W. Wise}

Department of Mathematics,
Oklahoma State University,
401 Mathematical Sciences,
Stillwater, OK 74078-1058,
USA

ninadawn@gmail.com


\begin{thebibliography}{1}

\bibitem{armitageandgardiner}
D.~H.~Armitage and S.~J.~Gardiner,
\newblock   Classical Potential Theory,
\newblock Springer-Verlag London Ltd., London, 2001.

\bibitem{aumann}
G.~Aumann,
\newblock Satz \"uber das verhalten von polynomen auf kontinuen,
\newblock   Sitz. Preuss. Akad. Wiss. Phys.-Math. Kl. (1933), 926--931.

\bibitem{blp2011}
A.~Baernstein, II, R.~S. Laugesen, and I.~E.~Pritsker,
\newblock Moment inequalities for equilibrium measures in the plane,
\newblock   Pure Appl. Math. Q. 7 (2011), 51--86.

\bibitem{borod}
S. V. Borodachov and N. Bosuwan,
\newblock Asymptotics of discrete Riesz $d$-polarization on subsets of $d$-dimensional manifolds,
\newblock   Potential Anal. (to appear).

\bibitem{BHS4}
S. V. Borodachov, D.P. Hardin and E.B. Saff,
\newblock Asymptotics of discrete Riesz polarization for rectifiable sets,
\newblock {(manuscript)}, 2013.

\bibitem{borwein}
P.~B.~Borwein,
\newblock Exact inequalities for the norms of factors of polynomials,
\newblock   Canad. J. Math. 46 (1994), 687--698.

\bibitem{boydII}
D.~W.~Boyd,
\newblock Two sharp inequalities for the norm of a factor of a polynomial,
\newblock   Mathematika 39 (1992), 341--349.

\bibitem{boyd}
D.~W.~Boyd,
\newblock Sharp inequalities for the product of polynomials,
\newblock   Bull. London Math. Soc. 26 (1994), 449--454.

\bibitem{erdsaff}
T.~Erd\'{e}lyi and E.~B.~Saff,
\newblock Riesz polarization inequalities in higher dimensions,
\newblock   J. Approx. Theory 171 (2013), 128--147.

\bibitem{farkas}
B. ~Farkas and B. ~Nagy,
 \newblock Transfinite diameter, Chebyshev constant and energy on locally compact spaces,
 \newblock   Potential Anal. 28 (2008) 241--260.

\bibitem{gelfond}
A.~O.~Gel'fond,
\newblock   Transcendental and Algebraic Numbers,
\newblock Dover Publications Inc., New York, 1960.

\bibitem{hks}
D.~P.~Hardin, A.~Kendall and E.~B.~Saff,
\newblock Polarization optimality of equally spaced points on the circle for discrete potentials,
\newblock   Discrete $\&$ Comp. Geometry 50 (2013), 236--243.
\newblock arXiv number: 1208.5261v1.

\bibitem{kneser}
H.~Kneser,
\newblock Das maximum des produkts zweies polynome,
\newblock   Sitz. Preuss. Akad. Wiss. Phys.-Math. Kl. (1934) 429--431.

\bibitem{kroopritsker}
A.~Kro{\'o} and I.~E.~Pritsker,
\newblock A sharp version of Mahler's inequality for products of polynomials,
\newblock   Bull. London Math. Soc. 31 (1999), 269--278.

\bibitem{landkof}
N.~S.~Landkof,
\newblock   Foundations of Modern Potential Theory,
\newblock Springer-Verlag, New York, 1972.

\bibitem{mahler}
K.~Mahler,
\newblock An application of Jensen's formula to polynomials,
\newblock   Mathematika 7 (1960), 98--100.

\bibitem{ohtsuka}
M.~Ohtsuka,
\newblock On various definitions of capacity and related notions,
\newblock   Nagoya Math. J. 30 (1967), 121--127.

\bibitem{pritskerspolynomials}
I.~E.~Pritsker,
\newblock Products of polynomials in uniform norms,
\newblock   Trans. Amer. Math. Soc. 353 (2001), 3971--3993.

\bibitem{reversetrianglegreen}
I.~E.~Pritsker,
\newblock Inequalities for sums of Green potentials and Blaschke products,
\newblock   Bull. Lond. Math. Soc. 43 (2011), 561--575.

\bibitem{prI}
I.~E.~Pritsker and S.~Ruscheweyh,
\newblock Inequalities for products of polynomials. {I},
\newblock   Math. Scand.  104 (2009), 147--160.

\bibitem{prII}
I.~E.~Pritsker and S.~Ruscheweyh,
\newblock Inequalities for products of polynomials. {II},
\newblock   Aequationes Math. 77 (2009), 119--132.

\bibitem{reversetriangle2D}
I.~E.~Pritsker and E.~B.~Saff,
\newblock Reverse triangle inequalities for potentials,
\newblock   J. Approx. Theory 159 (2009), 109--127.

\bibitem{me}
W.~Wise,
\newblock Potential theory and geometry of the farthest distance function,
\newblock   Potential Anal.  (2013) published online.

\end{thebibliography}
\end{document}